\begin{document}

\title[Models For The Maclaurin Tower Via A Derived Yoneda Embedding]{Models For The Maclaurin Tower Of A Simplicial Functor Via A Derived Yoneda Embedding }
\author{Peter J. Oman}

\address{Mathematics Department,  University of Western Ontario,
London, Ont., N6A 5B7} 
\email{poman@uwo.ca}

\begin{abstract} 
We prove that the Goodwillie tower of a weak equivalence preserving functor from spaces to spectra can be expressed in terms of the tower for stable mapping spaces.   Our proof is motivated by interpreting the functors $P_n$ and $D_n$ as pseudo-differential operators which suggests certain `integral' presentations based on a derived Yoneda embedding.  These models allow one to extend computational tools available for the tower of stable mapping spaces.  As an application we give a classical expression for the derivative over the basepoint. 
\end{abstract}


\maketitle
\newtheorem{thm}{Theorem}
\newtheorem{lem}[thm]{Lemma}
\newtheorem{prop}[thm]{Proposition}
\newtheorem{cor}[thm]{Corollary}
\theoremstyle{definition} \newtheorem{defn}[thm]{Definition}

\section{Introduction}
Goodwillie's homotopy calculus is a powerful tool in abstract homotopy theory.  It displays  homotopy theoretic information of a weak equivalence preserving functor, or homotopy functor, in a convenient form.   These ideas have provided a framework for arguments used in Waldhausen's K-Theory \cite{CalcI} and were applied by Kuhn in the chromatic setting \cite{gwtowers}.  In the unstable world, the tower interpolates between unstable and stable homotopy theory and has been exploited by Arone and Mahowald in the calculations of periodic homotopy groups of spheres \cite{perhom}.   

Homotopy calculus provides a non-vacuous 
language based in diffential geometry to study problems in homotopy theory.  
The goal of this paper is to exploit this analogy and give a simple model for the  Maclaurin tower of a finitary homotopy functor from spaces to spectra [such functors are called {\it functionals}].  Here the category of spaces means simplicial sets, $\mathbf{S}$, with its usual model structure and spectra to symmetric spectra, $\mathbf{Sp}$, with the stable model structure \cite{symspec}.  

The $N^{th}$ excisive approximation functor, $P_N$, corresponds to the operator mapping a function to its $N$-jet.  Viewing this functor as a `pseudo-differential operators' suggests a type of integral formula: the $N^{th}$ excisive
approximation of an arbitrary functor should be presentable as the integral
of a product with some `kernel' or `symbol'.
Using the dictionary between the smooth and homotopical settings, we can identify the appropriate analogue of linearity as homotopy colimit
preserving; thus, the
kernel should be determined by evaluating these operators on derived stable mapping
spaces.  These ideas culminate in \S 4 giving the following derived coend formulas for an arbitrary simplicial homotopy functor, corresponding to integration.
  
\theoremstyle{theorem}\newtheorem*{theoremA}{Theorem A} 
 \begin{theoremA}
The $n^{th}$ excisive approximation, $P_{n}$, and $n$-homogeneous approximation, $D_{n}$, (simplicial) functors acting on the category of functionals are pointwise weakly equivalent to the (enriched) derived coends \begin{center}
$(P_{n}F)(X) \sim \int^{\mathbf{S}^\circ} P_{n}( \Sigma^{\infty}Map_{S}(K,-)_{+})(X) \tilde{\wedge} F(K)$\\
$(D_{n}F)(X) \sim \int^{\mathbf{S}^\circ} D_{n} (\Sigma^{\infty}Map_{S}(K,-)_{+})(X) \tilde{\wedge} F(K)$.\end{center}
\end{theoremA}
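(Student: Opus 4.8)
The plan is to reduce the statement to two essentially independent ingredients: a derived density (co-Yoneda) presentation of an arbitrary finitary homotopy functor as a coend of representables, and the commutation of the Goodwillie functors $P_n$ and $D_n$ with that coend. Granting both, the formula is immediate — write $F$ as the coend, apply $P_n$ (resp. $D_n$), move it inside the coend, and extract the fixed spectra $F(K)$ from each $\tilde\wedge$-factor.

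First I would establish the derived co-Yoneda equivalence
\[
\int^{K\in\mathbf{S}^\circ} \Sigma^\infty \mathrm{Map}_S(K,-)_+ \,\tilde\wedge\, F(K) \;\xrightarrow{\ \sim\ }\; F,
\]
the counit assembled from the evaluation maps $\Sigma^\infty\mathrm{Map}_S(K,X)_+ \,\tilde\wedge\, F(K)\to F(X)$ supplied by the simplicial functoriality of $F$. This is exactly the content of the derived Yoneda embedding: the copower of the spectrum $F(K)$ by the derived mapping space $\mathrm{Map}_S(K,X)$ is $\Sigma^\infty\mathrm{Map}_S(K,X)_+\,\tilde\wedge\, F(K)$, so the coend computes the derived left Kan extension of $F$ along the inclusion $\mathbf{S}^\circ\hookrightarrow\mathbf{S}$. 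The finitary hypothesis is what permits the coend to be taken over the small dense subcategory $\mathbf{S}^\circ$ of finite (compact) spaces rather than all of $\mathbf{S}$: since $F$ preserves filtered homotopy colimits and every space is a filtered homotopy colimit of its finite subcomplexes, the counit — which is a weak equivalence on representables — is a pointwise weak equivalence. Some care with cofibrant/fibrant replacement is needed so that the point-set coend models the derived one.

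It then remains to commute $P_n$ and $D_n$ with the derived coend and to pull the fixed spectra $F(K)$ out of each smash. Recall that $P_n=\operatorname{hocolim}_k T_n^{\,k}$, where $T_nH(X)=\operatorname{holim}_{\varnothing\neq U\subseteq\{0,\dots,n\}} H(X*U)$ is a finite homotopy limit over the punctured $(n{+}1)$-cube. The coend in the variable $K$ and the filtered colimit defining $P_n$ are both homotopy colimits, so they interchange freely; the only genuine point is that the finite homotopy limits $\operatorname{holim}_U$ building $T_n$ commute with the coend. Here I would invoke stability of $\mathbf{Sp}$: the total homotopy fiber of a cube of spectra agrees, up to a shift, with its total homotopy cofiber, so $\operatorname{holim}_U$ is equivalent to a finite homotopy colimit, and finite homotopy colimits commute with all homotopy colimits. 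The same stability lets smashing by the fixed spectrum $F(K)$, an exact operation on $\mathbf{Sp}$, pass through the finite homotopy limits in $T_n$, whence
\[
P_n\big(\Sigma^\infty\mathrm{Map}_S(K,-)_+\,\tilde\wedge\, F(K)\big)(X)\;\simeq\;P_n\big(\Sigma^\infty\mathrm{Map}_S(K,-)_+\big)(X)\,\tilde\wedge\, F(K).
\]
Combining this with the co-Yoneda equivalence yields the $P_n$-formula; the $D_n$-formula follows verbatim, since $D_n=\operatorname{fib}(P_n\to P_{n-1})$ is another finite homotopy limit and is therefore subject to the identical interchange.

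The main obstacle is precisely this interchange of homotopy limits and homotopy colimits in the commutation step: it is false unstably, and the whole argument hinges on the stability of the target $\mathbf{Sp}$, which converts the finite homotopy limits appearing in $T_n$ and $D_n$ into finite homotopy colimits. A secondary, more technical difficulty is bookkeeping the model-categorical (co)fibrancy so that each strict identity above — point-set coend versus derived coend, and the passage of $-\tilde\wedge F(K)$ through $\operatorname{holim}_U$ — is realized as a genuine weak equivalence rather than merely a natural transformation; this is where the explicit cofibrant representable models furnished by the derived Yoneda embedding do the work.
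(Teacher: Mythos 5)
Your argument is correct, but it is the direct proof that the paper deliberately does not give. The paper obtains Theorem A as an immediate corollary of a general classification result (Lemma B, proved as Lemma 26 via Lemmas 23--25): \emph{every} homotopy operator --- an endofunctor of functionals preserving weak equivalences and all homotopy colimits --- is recovered from its symbol by the derived coend, and this equivalence is established indirectly, by strictifying both sides through simplicial and spectral enrichment ($[\mathbf{M}^{\circ},\mathbf{Sp}]\cong[\Sigma^{\infty}\mathbf{M}^{\circ},\mathbf{Sp}]$ via the monoidal adjunction $\Sigma^{\infty}\dashv\Omega^{\infty}$) and then quoting Kelly's strict enriched Yoneda classification (Corollary 3), with the comparison functors $S$ and $D$ shown to be homotopy inverse. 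Theorem A then needs only the single formal input that $P_{n}$ and $D_{n}$, acting on spectrum-valued functors, preserve all homotopy colimits. You instead bypass the operator-classification and the $2$-categorical strictification entirely: you present $F$ by the derived co-Yoneda coend and commute $P_{n}=\operatorname{hocolim}_{k}T_{n}^{k}$ through it, opening up Goodwillie's construction and using stability of $\mathbf{Sp}$ to convert the finite punctured-cube holims in $T_{n}$ (and the fiber defining $D_{n}$) into finite hocolims. The mathematical core coincides --- your density step is exactly what the paper does inside Lemma 24, where a projectively cofibrant replacement $F'$ is exhibited as a homotopy colimit of representables and the operator is pushed through --- and the author explicitly remarks after Lemma 26 that ``a more direct proof is possible.'' Your route buys brevity and makes the role of stability visible at the level of the cubical construction, rather than citing hocolim-preservation as a black box; the paper's route buys a statement valid for all homotopy operators at once, an equivalence of homotopy categories $hOp(\mathbf{S}_{s})\sim h[\mathbf{S}_{s}^{op}\otimes\mathbf{S}_{s},\mathbf{Sp}]$ with the symbol as a built-in inverse, and strict enriched machinery the author expects to reuse for ``local'' differential operators. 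One calibration in your co-Yoneda step: checking the counit on representables and reducing the space variable $X$ by filtered hocolims is not by itself enough; the reduction over an \emph{arbitrary} functional $F$ requires the cellular/density induction (cofibrant $F'$ is a homotopy colimit of representables, both sides of the counit commute with hocolims in $F$), which your passing appeal to the derived left Kan extension should be expanded into --- this is precisely the closing argument of the paper's Lemma 24, so the gap is one of exposition rather than substance.
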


\noindent There are essentially two (related) technical difficulties in obtaining this theorem: 
\begin{enumerate}
\item the {\it ad hoc} nature of the categorical framework at the foundations of homotopy calculus, \item obtaining derived versions of familiar categorical statements, such as a derived Yoneda embedding. 
\end{enumerate}
To elaborate on the former,  the basic objects in calculus are finitary homotopy functors from based spaces to spectra. This category can be awkward to work
with due to the fact it is ill-suited for categorical constructions, e.g., limits
and colimits. In addition, the natural homotopy theories encountered in calculus are difficult to work with. Indeed, these are not model structures but weaker homotopical structures.
The solution to this is a familiar application of the yoga of simplicial enrichment.  Following Biedermann et al.\cite{Biedermann}, we replace homotopy functors, and related categories, with categories of enriched presheaves of symmetric spectra and established appropriate model structures. Since we concentrate on finitary functors, our model structures are familiar and well behaved and in particular are cofibrantly generated.  Having simplicial model categories at our disposal allows us to solve the latter problem as well.  
In \S 3 we use some standard results in 2-category theory and enriched category theory, in particular the enriched Yoneda lemma, to indirectly obtain a very general (derived) spectrum-enriched Yoneda embedding.  In \S 4 we are interested in a  particular case of this lemma which takes the following form.

\newtheorem*{lemB}{Lemma B}
\begin{lemB}
Let $\mathbf{S}_{f}$ denote a skeleton of finite simplicial sets and $\mathbf{Sp}$ the category of symmetric spectra.  The category of homotopy-cocontinuous 
endofunctors acting on the  category of 
functionals, $h[\mathbf{S}_f,\mathbf{Sp}]$, is equivalent to the category of finitary homotopy functors 
$h[\mathbf{S}_f^{op} \otimes \mathbf{S}_f, \mathbf{Sp}].$ 
\end{lemB}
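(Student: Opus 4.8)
The plan is to recognize Lemma B as the homotopical incarnation of the classical universal property of the free $\mathbf{Sp}$-enriched cocompletion. Recall that for a small $\mathbf{Sp}$-category $\mathcal{A}$ the presheaf category $[\mathcal{A}^{op},\mathbf{Sp}]$ is its free cocompletion, so that restriction along the Yoneda embedding induces an equivalence $\mathrm{Cocont}([\mathcal{A}^{op},\mathbf{Sp}],\mathcal{D})\simeq[\mathcal{A},\mathcal{D}]$ for every cocomplete target $\mathcal{D}$. Taking $\mathcal{A}=\mathbf{S}_f^{op}$ identifies the functionals $[\mathbf{S}_f,\mathbf{Sp}]$ as the free cocompletion of $\mathbf{S}_f^{op}$; setting $\mathcal{D}=[\mathbf{S}_f,\mathbf{Sp}]$ and currying then yields the underived analogue of the statement, namely that cocontinuous endofunctors on functionals are the same as objects of $[\mathbf{S}_f^{op}\otimes\mathbf{S}_f,\mathbf{Sp}]$. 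The content of Lemma B is that this survives passage to the derived setting, where it is precisely the special case of the general derived spectrum-enriched Yoneda embedding of \S 3 with the target taken to be the functional category itself.

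First I would fix the cofibrantly generated simplicial model structures of \S 2 (following Biedermann et al.) on $[\mathbf{S}_f,\mathbf{Sp}]$ and $[\mathbf{S}_f^{op}\otimes\mathbf{S}_f,\mathbf{Sp}]$, so that $h[-]$ denotes the associated homotopy category, and I would write $Y\colon\mathbf{S}_f^{op}\to[\mathbf{S}_f,\mathbf{Sp}]$ for the derived Yoneda embedding $K\mapsto R_K$ with $R_K:=\Sigma^{\infty}Map_{S}(K,-)_{+}$. The equivalence is then exhibited by an explicit adjoint pair. In one direction, send a homotopy-cocontinuous endofunctor $T$ to $\Phi(T)$, its restriction along $Y$, i.e. to the bifunctor $(K,X)\mapsto T(R_K)(X)$; contravariance in $K$ places this in $h[\mathbf{S}_f^{op}\otimes\mathbf{S}_f,\mathbf{Sp}]$. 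In the other direction, send a bifunctor $G$ to the derived left Kan extension of $G(-,\bullet)$ along $Y$, computed by the enriched homotopy coend $\Psi(G)(F):=\int^{\mathbf{S}_f}G(K,-)\,\tilde{\wedge}\,F(K)$, which is exactly the shape of the kernels appearing in Theorem A.

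The verification then splits into three checks, each a derived shadow of a standard categorical fact. (i) $\Psi(G)$ is homotopy-cocontinuous, since homotopy coends commute with homotopy colimits in the unparametrised variable and $\tilde{\wedge}$ is a left Quillen bifunctor, so $\Psi(G)$ preserves homotopy colimits of functionals. (ii) The composite $T\mapsto\Psi(\Phi(T))$ is naturally weakly equivalent to the identity: by the derived density (co-Yoneda) formula every functional is canonically the homotopy coend of representables against its own values, $F\simeq\int^{\mathbf{S}_f}R_K\,\tilde{\wedge}\,F(K)$, whence homotopy-cocontinuity of $T$ lets it be commuted through the coend, giving $\Psi(\Phi(T))(F)\simeq\int^{\mathbf{S}_f}T(R_K)\,\tilde{\wedge}\,F(K)\simeq T(F)$. (iii) The composite $G\mapsto\Phi(\Psi(G))$ is naturally weakly equivalent to the identity by the reproducing property of representables under the coend pairing, $\int^{\mathbf{S}_f}G(K,X)\,\tilde{\wedge}\,\Sigma^{\infty}Map_{S}(K',K)_{+}\simeq G(K',X)$, which is once more the derived Yoneda lemma of \S 3.

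I expect the main obstacle to be purely homotopical bookkeeping rather than the categorical skeleton: one must guarantee that the point-set coend and smash product actually compute the derived coend together with its homotopy-colimit-preservation properties. Concretely, this requires replacing the representables $R_K$ by suitably cofibrant objects, flatness of $\tilde{\wedge}$ on the relevant cofibrant diagrams, and a projective-type model structure in which $(Y^{*},\mathrm{Lan}_Y)$ is a Quillen adjunction whose derived unit and counit are the weak equivalences of (ii) and (iii). The delicate point is the homotopical behaviour of the symmetric-spectra smash product \emph{inside} the (co)end, which is precisely the analytic input packaged by the general derived Yoneda embedding of \S 3; granting that embedding, Lemma B is simply its evaluation at the functional category.
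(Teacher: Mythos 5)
Your categorical skeleton is in fact the paper's own: your $\Phi$ is the homotopy symbol $hSym$, your $\Psi$ is the derived operator $D$ given by the derived coend, your check (iii) is Lemma~23 (Yoneda reduction of the coend against projectively cofibrant symbols), and your check (ii) is Lemma~24 (agreement on derived representables, then density of representables under homotopy colimits plus homotopy-cocontinuity of both sides). So the route is not genuinely different; the question is whether your deferral of the ``homotopical bookkeeping'' to the general embedding of \S 3 is legitimate. It is not, and the gap sits exactly where the paper does its real work.

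The classification of \S 3 (Lemmas~21--26) is proved under the hypothesis that $\mathbf{M}$ is a \emph{small simplicial model category}, so that Corollary~9 supplies a \emph{simplicial} fibrant-cofibrant replacement $Q$, every simplicial functor on the good objects $\mathbf{M}^{\circ}$ is automatically a homotopy functor, and the projective structures involved are cofibrantly generated. Neither category in Lemma~B satisfies this: $\mathbf{S}$ is large, and $\mathbf{S}_f$ is small but is \emph{not} a model category, and its objects are not fibrant. This is fatal to your construction as written: $R_K=\Sigma^{\infty}Map_{S}(K,-)_{+}$ is not an object of $h[\mathbf{S}_f,\mathbf{Sp}]$ at all, since $Map_{S}(K,X)$ is homotopy invariant in $X$ only for fibrant $X$ and finite simplicial sets are not Kan; for the same reason the bifunctor $(K,X)\mapsto\Sigma^{\infty}Map_{S}(K,X)_{+}$ appearing in your checks (ii) and (iii) computes underived mapping spectra, so your co-Yoneda resolution and reproducing identity fail on the nose. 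The missing idea is the paper's interpolation in \S 4: take a fibrant replacement $Q$ of the identity in $[\mathbf{S},\mathbf{S}]$ (which, since $\mathbf{S}$ is large, already requires the Chorny--Dwyer machinery for large diagram categories), form the small homotopical (non-model) category $\mathbf{S}_{s}=\{Q^{i}X \mid X\in\mathbf{S}_{fin}\}$ closed under $Q$, with good objects $\mathbf{S}^{\circ}$, replace the representables by the derived ones $\Sigma^{\infty}Map(QK,Q-)$, and rerun Lemmas~21 and~25 there -- this is why the coends in Theorem~A and Lemma~B are taken over $\mathbf{S}^{\circ}$ rather than over $\mathbf{S}_f$. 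Your proposed remedies address only the projective-cofibrancy side (cofibrant symbols, flat $\tilde{\wedge}$), not this fibrancy-of-inputs problem; and your hoped-for Quillen adjunction $(Y^{*},\mathrm{Lan}_Y)$ is not available off the shelf -- note the paper's own comparison functors $P^{*}$ and $Q^{*}$ are explicitly \emph{not} a Quillen pair, and the equivalence is established only at the level of homotopy categories.
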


\noindent The theorem follows immediately after noting that, when acting on functionals, $P_N$ and $D_N$ preserve all homotopy colimits.

Having expressed the tower of an arbitrary functional in terms of stable mapping spaces, we note Arone \cite{Snaith} has developed computationally convenient models for the tower of stable mapping spaces.  
This suggests that many of the computational techniques particular to stable mapping spaces in \cite{fine}, extend to the tower for an arbitrary functional.
In addition, for $n = 1$, we deduce that the derivative of an arbitrary functional is given by a classical expression.

\theoremstyle{theorem}\newtheorem*{theoremC}{Theorem C}
\begin{theoremC}
Let $K^*$ be the S-dual of a finite simplicial set K, then the derivative at the basepoint of an arbitrary functional $F$ is given by the expression
$$\partial^{1}(F)(*) \sim \int^{\mathbf{S}^\circ} K^{*}\tilde{\wedge}F (K).$$
\end{theoremC}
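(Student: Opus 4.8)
The plan is to deduce Theorem C from the $n=1$ case of Theorem A by applying the derivative-over-the-basepoint construction to the derived coend and identifying the resulting kernel. For a functional $G$ the first derivative over the basepoint is the coefficient of the linear layer $D_1 G$, computed by the stabilization formula
\[ \partial^{1} G(*) \;\sim\; \operatorname*{hocolim}_n \Omega^n \operatorname{hofib}\bigl(G(S^n) \to G(*)\bigr). \]
Since the target $\mathbf{Sp}$ is stable, $\operatorname{hofib}$, $\Omega^n$, and the sequential colimit are all exact, so $\partial^{1}(-)(*)$ preserves homotopy colimits of functionals. First I would take the $D_{1}$ presentation of Theorem A, apply $\partial^{1}(-)(*)$ in the variable $X$, and commute it past the derived coend in the variable $K$. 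Using $\partial^{1} D_{1} G(*) \sim \partial^{1} G(*)$, this gives
\[ \partial^{1} F(*) \;\sim\; \int^{\mathbf{S}^\circ} \partial^{1}\bigl(\Sigma^{\infty}Map_{S}(K,-)_{+}\bigr)(*) \,\tilde{\wedge}\, F(K), \]
reducing the theorem to the kernel identification $\partial^{1}\bigl(\Sigma^{\infty}Map_{S}(K,-)_{+}\bigr)(*) \sim K^{*}$.

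For the kernel I would unwind the stabilization formula on $G = \Sigma^{\infty}Map_{S}(K,-)_{+}$. Since $Map_{S}(K,*)=*$, the map $G(S^n) \to G(*)$ is $\Sigma^{\infty}Map_{S}(K,S^n)_{+} \to \mathbb{S}$, whose homotopy fibre is the reduced suspension spectrum $\tilde{\Sigma}^{\infty}Map_{S}(K,S^n)$ based at the constant map; hence
\[ \partial^{1}\bigl(\Sigma^{\infty}Map_{S}(K,-)_{+}\bigr)(*) \;\sim\; \operatorname*{hocolim}_n \Sigma^{-n}\tilde{\Sigma}^{\infty}Map_{S}(K,S^n). \]
The evaluation map $Map_{S}(K,S^n)\wedge K_{+} \to S^n$ is based, since the constant map evaluates to the basepoint, and its adjoint desuspends to a natural map $\Sigma^{-n}\tilde{\Sigma}^{\infty}Map_{S}(K,S^n) \to F(\Sigma^{\infty}_{+}K,\mathbb{S}) = K^{*}$. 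The remaining task is to show that this map becomes an equivalence after passing to the colimit over $n$.

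The main obstacle is precisely this last identification, which is a form of Spanier--Whitehead duality: for a finite simplicial set $K$ the unstable evaluation map $Map_{S}(K,S^n) \to \Omega^{\infty}(\Sigma^n K^{*})$ is highly connected, with connectivity growing linearly in $n$, so that after desuspending and taking the filtered colimit it becomes an equivalence. I would establish the connectivity estimate by induction over a finite cell filtration of $K$: the base case is a single cell, where $Map_{S}(S^m,S^n)$ is controlled by Freudenthal, and the inductive step uses the fibre sequences of mapping spaces attached to cofibre sequences in $K$, together with the fact that both sides convert these into the \emph{same} fibre sequences of spectra. A secondary technical point, to be verified carefully, is the legitimacy of commuting $\partial^{1}(-)(*)$ past the \emph{derived} coend; this follows from the homotopy-colimit preservation noted above once the coend is modelled as a cofibrant bar construction, so that the interchange is an instance of exact functors commuting with realizations.
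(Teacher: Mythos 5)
Your proposal is correct, and its skeleton is the same as the paper's: specialize Theorem A to $n=1$ and identify the linear kernel attached to the representable $\Sigma^{\infty}Map_{S}(K,-)_{+}$. The divergence is in how the two remaining steps are handled. The paper proves the kernel identification $D_{1}(\Sigma^{\infty}Map_{\mathbf{S}_{*}}(K,-))(X) \sim K^{*}\wedge X$ purely by citation (\cite{CalcIII} \S 7, with alternatives in \cite{Chain}, \cite{Hessel}, and Arone's configuration-space models \cite{Snaith}), and then extracts the coefficient directly from the resulting formula $D_{1}F(X)\sim \int^{\mathbf{S}^{\circ}_{*}}(K^{*}\wedge X)\,\tilde{\wedge}\,F(K)$, since $X$ appears as a constant in the coend variable $K$; you instead (i) extract the coefficient by applying the exact functor $\partial^{1}(-)(*)$ and commuting it past the derived coend, which is equivalent and which you correctly justify via the bar-construction model, and (ii) re-prove the kernel identification from scratch as a Spanier--Whitehead duality statement, via the stabilization formula, the adjoint of the evaluation map, and a Freudenthal/cell-induction connectivity estimate. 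What each route buys: the paper's citation keeps the argument one line long and plugs directly into Arone's models for all $n$, which is its broader point; your version is self-contained and, usefully, keeps the based/unbased bookkeeping straighter than the paper does --- your answer $F(\Sigma^{\infty}_{+}K,\mathbb{S})$ is the correct reading of the paper's loosely defined $K^{*}$ when the coend runs over the unbased category $\mathbf{S}^{\circ}$ (for $K=*$ the reduced dual $\underline{hom}_{\mathbf{Sp}}(\Sigma^{\infty}K,\mathbb{S})$ would give the wrong, trivial, answer, while $\mathbb{D}(K_{+})=\mathbb{S}$ is correct). The duality lemma you sketch is classical and your plan for it is sound; the only step to make explicit is converting the unstable connectivity estimate for $Map_{S}(K,S^{n})\to \Omega^{\infty}(\Sigma^{n}K^{*})$ into the stable statement, which needs the standard fact that $\tilde{\Sigma}^{\infty}\Omega^{\infty}E\to E$ is an equivalence in a range growing with the connectivity of $E$, so that desuspending and passing to the colimit over $n$ yields the claimed equivalence $\partial^{1}(\Sigma^{\infty}Map_{S}(K,-)_{+})(*)\sim K^{*}$.
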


More philosophically, this work presents an example of the utility of enriched categorical ideas within the foundations of homotopy calculus.  As mentioned above, the geometric intuition behind Theorem A requires a derived version of a familiar categorical statement.  To prove these derived statements, we typically use the 
well-known idea of including `higher homotopies' into our categorical data.  This should provide a familiar categorical language when objects (or categories) have some `up-to-something coherent' properties.   We use model categories enriched over simplicial sets and symmetric spectra as a realization of this idea, since retaining a model structure helps with homotopical calculations.  Goodwillie has suggested defining a category whose objects are homotopical categories (thought of as manifolds) with enough structure such that one can make `coordinate-free'  geometric constructions.   This work suggests some notion of enriched category theory should certainly play a role in this construction. 

Some comments about notation and language are appropriate.  
The notation employed follows Kelly \cite{Kelly}; for example, the category of (enriched) spectrum valued presheaves is denoted $[ \mathbf{S}, \mathbf{Sp}]$.  
We frequently use the same notation for a $\mathbf{V}$-enriched category (or construction) and its underlying $\mathbf{Set}$-enriched version.
The word derived is used ambiguously when describing  constructions rendering a functor homotopy invariant in some universal manner.  On occasion, these are not the usual left or right derived convention. For example, if F is homotopy invariant on fibrant-cofibrant objects, then precomposition with fibrant-cofibrant replacement is the derived functor of F.  

\section{Preliminaries}

\subsection{Category Theory}

We will assume that the reader is familiar with the basics of enriched category theory over a (weak) symmetric monoidal category.  To the reader unfamiliar with the notion of enrichment, it is generally safe to assume we are simply using a diagrammatic reformulation of the properties of hom-sets and generalize to hom-objects in a monoidal category.  Indeed, because the monoidal categories of interest are complete, cocomplete and closed the basic canon of categorical constructions and theorems hold verbatim \cite{Kelly}.  We  freely use the language of bicategories, or weak 2-categories, to naturally discuss collections of categories, functors, and natural transformations \cite{maclane}.  By a $\mathbf{V}$-category we mean an object of the  bicategory $\mathbf{V}$-$\mathbf{CAT}$ of (locally small) categories enriched over a monoidal category $\mathbf{V}$.  We circumvent any set theoretic objections to large categories like \textbf{Sets}, \textbf{S}, and $\mathbf{CAT}$ by appealing to Grothendieck's universe axiom \cite{Groth}. 

The crucial result we will need is the enriched version of the classical Yoneda embedding.  Let $Y:\mathbf{A}^{op} \rightarrow [\mathbf{A}, \mathbf{V}]$ be the ($\mathbf{V}$-enriched) Yoneda map between $\mathbf{V}$-categories which acts on objects by the map $A \mapsto \underline{hom}_{\mathbf{A}}(A,-)$.

\begin{prop}(\cite{Kelly} \S 4.4)
Let $Cocont[[\mathbf{A},\mathbf{V}],\mathbf{B}]$ denote the full subcategory of \\
$[[\mathbf{A},\mathbf{V}],\mathbf{B}]$ consisting of cocontinuous (colimit preserving) functors.  If $\mathbf{B}$ is a complete and cocomplete $\mathbf{V}$-category, then the functor $[Y,1]$ induces an equivalence of categories 
 $$Cocont[[\mathbf{A},\mathbf{V}],\mathbf{B}] \cong [\mathbf{A}^{op},\mathbf{B}]. $$
 
\end{prop}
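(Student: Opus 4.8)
The plan is to exhibit an explicit pseudo-inverse to $[Y,1]$ and identify it with left Kan extension along $Y$. Since $[Y,1]$ is precomposition $G\mapsto G\circ Y$, I would construct a candidate inverse $L\colon[\mathbf{A}^{op},\mathbf{B}]\to Cocont[[\mathbf{A},\mathbf{V}],\mathbf{B}]$ sending a functor $H\colon\mathbf{A}^{op}\to\mathbf{B}$ to the $\mathbf{V}$-functor defined by the weighted colimit (coend)
$$L(H)(F)\;=\;\int^{A\in\mathbf{A}}F(A)\cdot H(A),$$
where $F(A)\cdot H(A)$ denotes the copower (tensor) of $H(A)\in\mathbf{B}$ by $F(A)\in\mathbf{V}$. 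This exists because $\mathbf{B}$ is cocomplete, hence admits all small copowers and conical colimits, i.e. all small weighted colimits. Equivalently, $L(H)$ is the left Kan extension $\mathrm{Lan}_Y H$, which is why I expect it to land in the cocontinuous functors: a functor given by a weighted colimit in the weight variable preserves weighted colimits, since colimits commute with colimits and the copower is cocontinuous in its $\mathbf{V}$-argument.

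There are then two families of coherent isomorphisms to verify, one for each composite. For $[Y,1]\circ L\cong 1$ I must show $L(H)\circ Y\cong H$, which is a direct application of the co-Yoneda lemma: evaluating at $A'$ gives $L(H)(YA')=\int^{A}\underline{hom}_{\mathbf{A}}(A',A)\cdot H(A)\cong H(A')$, reflecting that $Y$ is fully faithful. For $L\circ[Y,1]\cong 1$ I must show $L(GY)\cong G$ for cocontinuous $G$. This is the \emph{density theorem}: the enriched co-Yoneda lemma gives the canonical presentation $F\cong\int^{A}F(A)\cdot Y(A)$ of every presheaf as a weighted colimit of representables, and then cocontinuity of $G$ lets me commute $G$ past the coend,
$$G(F)\;\cong\;G\!\left(\int^{A}F(A)\cdot Y(A)\right)\;\cong\;\int^{A}F(A)\cdot G(Y(A))\;=\;L(GY)(F).$$
Both isomorphisms are natural in their arguments, so they assemble into the unit and counit of an adjoint equivalence restricting the adjunction $L=\mathrm{Lan}_Y\dashv[Y,1]$.

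The main obstacle is not any single calculation but the bookkeeping needed to make everything genuinely \emph{enriched} and \emph{$2$-categorical}. Concretely, I would need to (i) fix the meaning of cocontinuous as preservation of all small weighted colimits and check that, for a cocomplete $\mathbf{V}$-category $\mathbf{B}$, this is equivalent to preserving conical colimits together with copowers, so that the interchange step above is licensed; (ii) verify that the comparison $L(GY)\to G$ is the counit of $L\dashv[Y,1]$, induced by the density presentation $F\cong\int^{A}F(A)\cdot Y(A)$, so that it is a \emph{natural} isomorphism exactly on the cocontinuous $G$ rather than merely objectwise; and (iii) confirm that $L$ is itself a $\mathbf{V}$-functor on the relevant functor $\mathbf{V}$-categories. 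The enriched Yoneda lemma invoked above supplies the computational engine for (i) and (ii), while the cocompleteness hypothesis on $\mathbf{B}$ is exactly what is needed for the existence of the coend and for the interchange; with these in hand the equivalence $Cocont[[\mathbf{A},\mathbf{V}],\mathbf{B}]\cong[\mathbf{A}^{op},\mathbf{B}]$ follows. Since this is Kelly's result, for the full verification I would defer to \cite{Kelly} \S4.4, treating the sketch above as the conceptual skeleton.
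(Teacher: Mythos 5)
Your proposal is correct and matches the paper's treatment: the paper gives no argument of its own but simply cites Kelly \S 4.4, and your sketch---the pseudo-inverse $\mathrm{Lan}_Y H = \int^{A} F(A)\cdot H(A)$, the co-Yoneda computation on representables, and the density presentation commuted past a cocontinuous $G$---is exactly Kelly's proof of that result. Note only that your argument uses just cocompleteness of $\mathbf{B}$ (completeness in the paper's hypothesis is not needed here), and that smallness of $\mathbf{A}$, handled in the paper via universes, is what makes $[\mathbf{A},\mathbf{V}]$ and the coends legitimate.
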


\noindent We are interested in the special case of the above theorem when  $\mathbf{B}=[\mathbf{A},\mathbf{V}]$.

\begin{defn} An \textit{operator} is an object of the functor category 
$$Op^{\mathbf{V}}(\mathbf{A})\coloneqq Cocont[[\mathbf{A},\mathbf{V}],[\mathbf{A},\mathbf{V}]]$$. \end{defn} 

\begin{cor} (Classification of Operators)  The following categories are equivalent 
$$ Op^{\mathbf{V}}(\mathbf{A})\cong 
[\mathbf{A}^{op},[\mathbf{A},\mathbf{V}]] \cong [\mathbf{A}^{op} \otimes \mathbf{A}, \mathbf{V}].$$
\end{cor}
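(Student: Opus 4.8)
The plan is to chain together two equivalences: the first supplied by the Proposition, and the second by the closed structure of $\mathbf{V}$-$\mathbf{CAT}$.

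First I would establish $Op^{\mathbf{V}}(\mathbf{A}) \cong [\mathbf{A}^{op}, [\mathbf{A}, \mathbf{V}]]$ by specializing the Proposition to the case $\mathbf{B} = [\mathbf{A}, \mathbf{V}]$. The only hypothesis to verify is that $[\mathbf{A}, \mathbf{V}]$ is a complete and cocomplete $\mathbf{V}$-category. Since $\mathbf{V}$ is itself complete, cocomplete, and closed symmetric monoidal (as recalled in the preliminaries), weighted limits and colimits in the functor $\mathbf{V}$-category $[\mathbf{A}, \mathbf{V}]$ exist and are computed objectwise in $\mathbf{V}$; hence $[\mathbf{A}, \mathbf{V}]$ carries all the required limits and colimits. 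With this hypothesis in hand, the Proposition yields directly the equivalence induced by $[Y,1]$, after unpacking the left-hand side as $Op^{\mathbf{V}}(\mathbf{A}) = Cocont[[\mathbf{A},\mathbf{V}],[\mathbf{A},\mathbf{V}]]$.

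Next I would produce the second equivalence $[\mathbf{A}^{op}, [\mathbf{A}, \mathbf{V}]] \cong [\mathbf{A}^{op} \otimes \mathbf{A}, \mathbf{V}]$. This is an instance of the exponential law for enriched functor categories: because $\mathbf{V}$ is symmetric monoidal closed, the tensor product $\otimes$ on $\mathbf{V}$-$\mathbf{CAT}$ is left adjoint to the internal hom $[-,-]$, giving a natural isomorphism of $\mathbf{V}$-categories $[\mathbf{C} \otimes \mathbf{D}, \mathbf{E}] \cong [\mathbf{C}, [\mathbf{D}, \mathbf{E}]]$ (Kelly \S 1.5, \S 2.3). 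Taking $\mathbf{C} = \mathbf{A}^{op}$, $\mathbf{D} = \mathbf{A}$, and $\mathbf{E} = \mathbf{V}$ delivers the stated isomorphism, and composing the two equivalences completes the proof.

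The main point requiring care is not any single deep step but the bookkeeping around it. The principal obstacle is managing the size hypotheses: for the functor $\mathbf{V}$-categories to be locally small and for the cited Proposition to apply, one needs $\mathbf{A}$ small relative to a chosen universe, which is precisely where the appeal to Grothendieck's universe axiom enters. A secondary point is to confirm that the equivalences are genuinely $\mathbf{V}$-enriched, rather than mere equivalences of underlying ordinary categories, so that the composite respects the enrichment; this follows because both constituent equivalences are built from enriched adjunctions and the enriched Yoneda map.
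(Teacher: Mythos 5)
Your proof is correct and follows exactly the route the paper intends: the first equivalence is the Proposition specialized to $\mathbf{B} = [\mathbf{A},\mathbf{V}]$ (with pointwise limits and colimits supplying completeness and cocompleteness), and the second is the closed-structure isomorphism $[\mathbf{A}^{op}\otimes\mathbf{A},\mathbf{V}] \cong [\mathbf{A}^{op},[\mathbf{A},\mathbf{V}]]$ from Kelly, with size issues handled by the universe axiom just as the paper does. Your additional remarks on enrichment and smallness are sensible bookkeeping but add nothing beyond the paper's implicit argument.
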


\begin{defn} 
The \textit{symbol} of an operator $\mathbb{L}\in Cocont[[\mathbf{A},\mathbf{V}],[\mathbf{A},\mathbf{V}]]$ is the corresponding bifunctor $Sym_{\mathbb{L}}:\mathbf{A}^{op} \otimes \mathbf{A}\rightarrow \mathbf{V}$. \end{defn}

The bicategory of monoidal categories together with (lax) monoidal functors and natural transformations will be denoted $\mathbf{Mon}$.  A monoidal functor $F:\mathbf{V} \rightarrow \mathbf{V'}$ induces a 2-functor $F:\mathbf{V}$-$\mathbf{CAT} \rightarrow \mathbf{V'}$-$\mathbf{CAT}$ by acting as the identity map on objects and on mapping hom-objects as $\underline{hom}(A,B) \mapsto F(\underline{hom}(A,B))$.  An adjunction in $\mathbf{Mon}$, 
$R \dashv L:\mathbf{V} \rightarrow \mathbf{V'}$, gives an equivalence of enriched functor categories $[ L\mathbf{I},\mathbf{C} ] \cong [\mathbf{I},R\mathbf{C}]$, where $\mathbf{I} \in \mathbf{V}$-$\mathbf{Cat}$ and $\mathbf{C} \in \mathbf{V'}$-$\mathbf{CAT}$.  Note the necessity of bicategories  to articulate the above categorical equivalence -- rather than a bijection of sets.

\subsection{Homotopy Theory}
A model category is the most common notion of a homotopy theory, but
it is only one of many ideas attempting to capture the relationship between topological spaces and its homotopy category.  Homotopy calculus, or simply calculus in this context, applies to homotopy functors between categories with a relatively weak notion of  homotopy theory.  The constructions reviewed in section 2.4 depend on the homotopy theory of homotopy functors -- which, in general, is \textit{not} a model category.  
The notion of homotopical category is  sufficiently weak  to discuss natural homotopy theories on functor categories. 
Homotopical categories consider  categorical localization from the point of view of `homotopy'.  This assumes some structure on the class of maps, called weak equivalences, we wish to invert and avoids the additional structures relating to cofibrations and fibrations.  The idea that a reasonable amount of homotopy theory can be developed in such categories stems from the observation that in a model category fibrations and cofibrations play a secondary role.   This approach has been developed in  \cite{DHKS} and we give the definition for the convenience of the reader.  

\begin{defn}
A \textit{homotopical category} will be a category $\mathbf{C}$ with a distinguished class $\mathcal{W}$, called weak equivalences, satisfying the following axioms:
\begin{enumerate}
\item $\mathcal{W}$ contains all the identity maps in $\mathbf{C}.$
\item $\mathcal{W}$ satisfies the two out of six property: for every three maps $f,g \text{ and } h$ such that the two compositions $gf \text{ and } hg$ are defined and are in $\mathcal{W}$ then so are the four maps $f,g,h, \text{ and } hgf$.
\end{enumerate}
\end{defn}

It is important to note that for \textit{any} small category \textbf{I} and \textit{any} homotopical category \textbf{C}, $[\mathbf{I}, \mathbf{C}]$ is naturally a homotopical category with pointwise weak equivalences.  Model categories are sometimes too refined an abstract notion of homotopy and homotopical categories give a coarser, internal theory.
The localization, or formal inversion, at weak equivalences will be called the homotopy category of \textbf{C}, denoted $Ho(\mathbf{C})$. In general, the construction of the homotopy category requires the existence of a larger universe.  For model categories, set theoretic obstructions are avoided as morphisms in the localized category are constructed as quotients.  The homotopical categories considered here are full subcategories of model categories and thus we can eschew subtle set theory.
Technically, we restrict ourselves to homotopical categories that are \textit{saturated} \cite{DHKS} and whose localizations exist in a given universe.

\subsection{Simplicial Model Categories}
The homotopy theory of homotopy functors, with pointwise equivalence, is notoriously difficult to express via model categories. However, simplicial enrichment typically gives `higher homotopy' information and simplicial functors are naturally compatible with this structure. 
Fortunately, many model structures are compatible with a natural enrichment over simplicial sets (see \cite{Simpstruct}).  

\begin{defn}
Let $\mathbf{M}$ be a model category which is enriched, tensored, and cotensored over simplicial sets with hom-space denoted $Map_{\mathbf{M}}(A,B)$.  We say $\mathbf{M}$ is a simplicial model category if, in addition, it satisfies the following axiom: \\

\textbf{(SM)} if $i:A \rightarrow B$ is a cofibration and $p:X \rightarrow Y$ is a fibration in $\mathbf{M}$, then the map of simplicial sets \begin{center}
 $Map_{\mathbf{M}}(B,X) \xrightarrow{i^{*}\times p_{*}} Map_{\mathbf{M}}(A,X) \times_{Map_{\mathbf{M}}(A,Y)} Map_{\mathbf{M}}(B,Y)$ \end{center}
is a fibration, which is a weak equivalence if either $i$ or $p$ is a weak equivalence.
\end{defn}

\noindent The canonical example of a simplicial model category is, of course, simplicial sets.  In addition, the simplicial model  structure on spectra \cite{symspec} will also play a special role.  
One advantage of simplicialization is the simplification of homotopically meaningful constructions, such as homotopy colimits.  

\begin{prop}
Let $\mathbf{M}$ be a simplicial model category.  For objects X and Y in \textbf{M} which are cofibrant and fibrant (respectively) and for any simplicial set K:
\begin{enumerate}
\item $X\otimes - \text{ and } -\otimes K$ preserve cofibrations and trivial cofibrations, 

\item $Map_{\mathbf{M}}(-,Y)$ converts cofibrations and trivial cofibrations into fibrations and trivial fibrations respectively,

\item $- \otimes -: \mathbf{M} \times \mathbf{S} \rightarrow \mathbf{M}$ preserves weak equivalences between pairs of cofibrant objects,
\item $Map_{\mathbf{M}}(-,-): \mathbf{M}^{op} \times \mathbf{M} \rightarrow \mathbf{S}$ preserves weak equivalences between pairs of fibrant-cofibrant objects.
\end{enumerate}
\end{prop}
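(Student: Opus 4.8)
The plan is to derive all four statements from the single axiom \textbf{(SM)} by exploiting the two-variable adjunction relating the tensor, cotensor, and mapping space, together with Ken Brown's lemma.

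First I would record the adjoint reformulations of \textbf{(SM)}. Because $\mathbf{M}$ is tensored, cotensored, and enriched over $\mathbf{S}$, the triple $(-\otimes-,\ Map_{\mathbf{M}}(-,-),\ (-)^{(-)})$ is a two-variable adjunction, and a standard adjunction argument shows that the pullback-corner condition \textbf{(SM)} is equivalent to the pushout-product condition: for a cofibration $i:A\to B$ in $\mathbf{M}$ and a cofibration $j:K\to L$ in $\mathbf{S}$, the induced map
\[
A\otimes L \amalg_{A\otimes K} B\otimes K \longrightarrow B\otimes L
\]
is a cofibration in $\mathbf{M}$, which is trivial whenever $i$ or $j$ is. This reformulation is the crux, and I expect the bookkeeping of this two-variable adjunction (and the attendant variance juggling in (4)) to be the main obstacle; once it is in place the remaining steps are specializations.

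For (1) I would specialize the pushout-product. Taking $j$ to be $\emptyset\to K$ (a cofibration, since every monomorphism of simplicial sets is one) and using that $-\otimes\emptyset$ is the initial object, the pushout-product map collapses to $i\otimes K$; hence $-\otimes K$ preserves cofibrations, and trivial cofibrations when $i$ is trivial. Symmetrically, taking $i$ to be $\emptyset\to X$ — a cofibration precisely because $X$ is cofibrant — collapses the pushout-product to $X\otimes j$, giving that $X\otimes-$ preserves cofibrations and trivial cofibrations. For (2) I would instead specialize \textbf{(SM)} itself: with $p$ the fibration $Y\to\ast$ (a fibration since $Y$ is fibrant) and using $Map_{\mathbf{M}}(-,\ast)=\ast$, the corner map reduces to $i^{\ast}\colon Map_{\mathbf{M}}(B,Y)\to Map_{\mathbf{M}}(A,Y)$, which is therefore a fibration, trivial if $i$ is. I would also record the companion obtained by specializing $i$ to $\emptyset\to A$ with $A$ cofibrant: since $Map_{\mathbf{M}}(\emptyset,-)=\ast$, the corner map reduces to $p_{\ast}\colon Map_{\mathbf{M}}(A,X)\to Map_{\mathbf{M}}(A,Y)$, so $Map_{\mathbf{M}}(A,-)$ carries (trivial) fibrations to (trivial) fibrations. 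This companion is not listed but is needed for (4).

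Finally, (3) and (4) follow by combining the above with Ken Brown's lemma (and its contravariant dual). For (3), part (1) shows $X\otimes-$ sends trivial cofibrations of simplicial sets to trivial cofibrations, hence to weak equivalences; since every simplicial set is cofibrant, Ken Brown's lemma gives that $X\otimes-$ preserves all weak equivalences of $\mathbf{S}$. Likewise $-\otimes L$ sends trivial cofibrations between cofibrant objects to weak equivalences, so it preserves weak equivalences between cofibrant objects. Factoring $f\otimes g=(f\otimes L)\circ(X\otimes g)$ and applying two-out-of-three yields (3). For (4), part (2) shows $Map_{\mathbf{M}}(-,Y)$ sends trivial cofibrations between cofibrant objects to trivial fibrations, so the contravariant form of Ken Brown's lemma makes it preserve weak equivalences between cofibrant objects (with $Y$ fibrant); the companion above makes $Map_{\mathbf{M}}(A,-)$ preserve weak equivalences between fibrant objects (with $A$ cofibrant). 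Factoring $Map_{\mathbf{M}}(f,g)$ as $Map_{\mathbf{M}}(A,g)\circ Map_{\mathbf{M}}(f,Y)$ and again using two-out-of-three completes the proof.
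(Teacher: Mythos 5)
Your proposal is correct, but there is no argument in the paper to compare it against: the paper states this proposition without proof, treating it as standard background on simplicial model categories (it is the usual package of consequences of Quillen's axiom SM7, found in the cited references of Hirschhorn and Hovey). Your route --- reformulating \textbf{(SM)} as the pushout-product axiom via the two-variable adjunction $\bigl(-\otimes -,\ Map_{\mathbf{M}}(-,-),\ (-)^{(-)}\bigr)$, specializing along $\emptyset \to X$, $\emptyset \to K$, and $Y \to \ast$ to get (1), (2), and the unlisted companion for $Map_{\mathbf{M}}(A,-)$, and then promoting (trivial) cofibration/fibration preservation to weak-equivalence preservation by Ken Brown's lemma, its contravariant form, and a factorization of the two-variable map --- is exactly the standard argument the paper implicitly invokes, and all the specializations are justified correctly (the collapses of the corner maps use that $A\otimes -$, $-\otimes K$ preserve colimits and $Map_{\mathbf{M}}$ converts them to limits, which you have since these are adjoints). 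One cosmetic slip in the final step: the composite $Map_{\mathbf{M}}(A,g)\circ Map_{\mathbf{M}}(f,Y)$ does not typecheck, since the codomain of $Map_{\mathbf{M}}(f,Y)$ is $Map_{\mathbf{M}}(A,Y)$ rather than the domain $Map_{\mathbf{M}}(A,X)$ of $Map_{\mathbf{M}}(A,g)$; the intended factorization is $Map_{\mathbf{M}}(f,g)=Map_{\mathbf{M}}(A,g)\circ Map_{\mathbf{M}}(f,X)$ (or equivalently $Map_{\mathbf{M}}(f,Y)\circ Map_{\mathbf{M}}(B,g)$), after which your two-out-of-three argument goes through verbatim, with the fibrant-cofibrant hypotheses on both pairs used exactly where you placed them.
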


As a corollary to the above proposition, simplicial model categories come equipped with a natural cylinder functor $- \otimes \Delta(1)$.
In general, we say maps $f$ and $g$ are simplicially homotopic if there exists a map $H:X \otimes \Delta(1) \rightarrow Y$ such that restriction to the initial and final vertex is f and g respectively.  The above lemma together with Whitehead's theorem implies that weak equivalences between fibrant-cofibrant objects are  simplicial equivalences; in other words, a simplicial functor by virtue of the compatibility with the simplicial structure is, in a sense, compatible with weak equivalences.
This observation is essential to developing a model category for the homotopy theory of homotopy functors. 

Before discussing a model structure for homotopy functors we must discuss diagram categories in the simplicial setting.
Model structures on  diagram categories, in general, are very difficult to construct. To guarantee a natural model structure we must make severe restrictions on the indexing category, e.g., assuming the indexing category is a Reedy \cite{Hovey} category or a very small category \cite{modcat}.  In the simplicial setting this becomes even more complicated.  However, if $\mathbf{M}$ has the additional property of cofibrant generation, then the projective structure does  generalize.  

\begin{thm}
 If $\mathbf{I}$ is a small, simplicial indexing category and $\mathbf{M}$ a simplicial, cofibrantly generated model category,  then $[\mathbf{I},\mathbf{M}]$ has a cofibrantly generated,  simplicial model structure with pointwise weak equivalences and fibrations.
 \end{thm}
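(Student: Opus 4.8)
The plan is to construct the projective model structure on $[\mathbf{I},\mathbf{M}]$ by transferring the cofibrantly generated model structure from $\mathbf{M}$ along the adjunctions supplied by evaluation at each object of $\mathbf{I}$. Since $\mathbf{M}$ is cofibrantly generated, fix a set $I$ of generating cofibrations and a set $J$ of generating trivial cofibrations. For each object $i\in\mathbf{I}$, the evaluation functor $\mathrm{ev}_i:[\mathbf{I},\mathbf{M}]\to\mathbf{M}$ has a left adjoint $F_i$ given by the enriched left Kan extension along the inclusion of $i$; concretely $F_i(A)(j)=\underline{hom}_{\mathbf{I}}(i,j)\otimes A$, using the tensoring of $\mathbf{M}$ over $\mathbf{S}$ together with the simplicial enrichment of $\mathbf{I}$. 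I would then propose $FI:=\{F_i(f):i\in\mathbf{I},\,f\in I\}$ and $FJ:=\{F_i(g):i\in\mathbf{I},\,g\in J\}$ as the generating cofibrations and trivial cofibrations for the functor category, and define the weak equivalences and fibrations to be the pointwise ones, as the statement demands.

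First I would verify the purely categorical hypotheses of Kan's recognition theorem (the transfer principle for cofibrantly generated model structures): that $[\mathbf{I},\mathbf{M}]$ is complete and cocomplete, which follows because $\mathbf{M}$ is and limits and colimits in functor categories are computed pointwise; and that the sets $FI$ and $FJ$ permit the small object argument, which reduces by adjunction to the smallness of the domains of $I$ and $J$ in $\mathbf{M}$, since $F_i$ preserves the relevant colimits as a left adjoint. Next I would identify the lifting classes: by adjunction, a map $p$ has the right lifting property against $FI$ (respectively $FJ$) precisely when each $\mathrm{ev}_i(p)$ has the right lifting property against $I$ (respectively $J$) in $\mathbf{M}$, i.e. precisely when $p$ is a pointwise trivial fibration (respectively a pointwise fibration). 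This is exactly the characterization of fibrations and trivial fibrations we want.

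The main obstacle, and the crux of the transfer argument, will be verifying the acyclicity condition: every relative $FJ$-cell complex must be a pointwise weak equivalence. The standard route is to show that the maps in $FJ$ are themselves pointwise trivial cofibrations and that this class is closed under the transfinite compositions and pushouts produced by the small object argument. Closure under pushout and transfinite composition for pointwise weak equivalences follows because colimits are pointwise and because $\mathbf{M}$, being a model category, has its trivial cofibrations closed under these operations at each object. The delicate point is that each generator $F_i(g)$ is a pointwise weak equivalence: at an object $j$ this map is $\underline{hom}_{\mathbf{I}}(i,j)\otimes g$, a simplicial set tensored with a generating trivial cofibration, and Proposition statement (1) on the tensoring $-\otimes K$ preserving trivial cofibrations, combined with the simplicial axiom (SM), is precisely what guarantees this stays a weak equivalence. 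Here the simplicial enrichment of $\mathbf{M}$ is doing essential work that is unavailable for a general (non-simplicial) model category, which explains the hypotheses.

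Finally I would confirm the compatibility of the resulting model structure with the simplicial structure, namely axiom (SM) for $[\mathbf{I},\mathbf{M}]$ with its pointwise tensoring, cotensoring, and mapping spaces. Since cofibrations in the projective structure are retracts of relative $FI$-cell complexes and the pushout-product axiom can be checked against the generating sets, this reduces by adjunction to the pushout-product axiom in $\mathbf{M}$ together with the preservation properties recorded in the Proposition. I expect the acyclicity step above to be the genuine content; the remaining verifications are formal applications of the recognition theorem and adjunction arguments.
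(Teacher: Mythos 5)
Your proposal is correct and is essentially the argument the paper delegates to Hirschhorn \S 11.6 ``with the word simplicial where appropriate'': transfer of the projective structure along the evaluation adjunctions, with generating sets $\{F_i(f)\}$ where $F_i(A)=\underline{hom}_{\mathbf{I}}(i,-)\otimes A$, smallness and the lifting characterizations by adjunction, and acyclicity from closure properties of trivial cofibrations under pointwise colimits. You also correctly isolate the one genuinely enriched step that the paper's citation glosses over --- that $\underline{hom}_{\mathbf{I}}(i,j)\otimes g$ remains a trivial cofibration, which follows from axiom (SM) since every simplicial set is cofibrant, whereas in the unenriched case of Hirschhorn the left adjoint is merely a coproduct and no such input is needed --- so there is nothing to correct.
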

 
 \begin{proof}
See \cite{Hirsch} \S 11.6 and simply use the word simplicial where appropriate.  For an enjoyable, general discussion of  homotopy theory for enriched diagram categories see \cite{EnrMod}.
\end{proof}

An important observation is that, a priori, (co)fibrant replacement is not \textit{simplicial}: the small object argument does not interact with the simplicial structure. Theorem 8 indirectly provides a simplicial replacement functor when the category is small. 

\begin{cor}
If $\mathbf{M}$ is that a small, cofibrantly generated simplicial model category, then there exists a simplicial  (co)fibrant  replacement functor.
\end{cor}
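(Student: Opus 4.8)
The strategy is to instantiate Theorem 8 with the indexing category taken to be $\mathbf{M}$ itself, and then to replace the identity functor inside the resulting functor category. Since $\mathbf{M}$ is by hypothesis small, simplicial and cofibrantly generated, it is a legitimate choice of simplicial indexing category, so Theorem 8 supplies a cofibrantly generated, simplicial model structure on the enriched functor category $[\mathbf{M},\mathbf{M}]$ whose weak equivalences and fibrations are detected pointwise. The crucial gain is that the objects of $[\mathbf{M},\mathbf{M}]$ are, by definition, \emph{simplicial} endofunctors of $\mathbf{M}$; hence anything manufactured by the model structure on $[\mathbf{M},\mathbf{M}]$ is automatically simplicial. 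This is exactly what repairs the failure of the naive small object argument to respect the enrichment.

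First I would note that $\mathrm{Id}_{\mathbf{M}}$ is a simplicial functor and so is an object of $[\mathbf{M},\mathbf{M}]$. Using the functorial factorizations guaranteed by cofibrant generation, I factor the two canonical maps, obtaining a cofibration followed by a trivial fibration and a trivial cofibration followed by a fibration,
$$\emptyset \longrightarrow Q \xrightarrow{\ \sim\ } \mathrm{Id}_{\mathbf{M}} \xrightarrow{\ \sim\ } R \longrightarrow \ast .$$
Both $Q$ and $R$ are objects of $[\mathbf{M},\mathbf{M}]$, hence simplicial endofunctors of $\mathbf{M}$, and the displayed maps are simplicial natural transformations. Because weak equivalences in the projective structure are pointwise, $Q(X)\to X$ and $X\to R(X)$ are weak equivalences in $\mathbf{M}$ for every object $X$.

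It remains to promote the projective (co)fibrancy of $Q$ and $R$ to pointwise (co)fibrancy. The fibrant case is formal: fibrations are pointwise, so $R\to\ast$ being a fibration says exactly that each $R(X)$ is fibrant in $\mathbf{M}$; thus $R$ is a simplicial fibrant replacement functor. For the cofibrant case the one genuine step is to show that each evaluation functor $\mathrm{ev}_{X}\colon[\mathbf{M},\mathbf{M}]\to\mathbf{M}$ is left Quillen. Since $\mathbf{M}$ is tensored and cotensored, $\mathrm{ev}_{X}$ has a right adjoint, the enriched right Kan extension along $X$, given objectwise by the cotensor $G_{X}(B)(Y)=B^{Map_{\mathbf{M}}(Y,X)}$. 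By the simplicial model category axiom (SM), applied to the cofibration $\emptyset\to K$ of simplicial sets, cotensoring by any simplicial set $K$ carries fibrations and trivial fibrations in $\mathbf{M}$ to fibrations and trivial fibrations; hence $G_{X}$ preserves pointwise (trivial) fibrations and is right Quillen. Therefore $\mathrm{ev}_{X}$ preserves cofibrations, and applying it to $\emptyset\to Q$ shows $Q(X)$ is cofibrant in $\mathbf{M}$. Thus $Q$ is a simplicial cofibrant replacement functor, which completes the argument. I expect this last verification — that projective cofibrancy descends to objectwise cofibrancy through the Quillen adjunction $\mathrm{ev}_{X}\dashv G_{X}$ — to be the only real obstacle, the fibrant half being immediate from the pointwise definition of fibrations.
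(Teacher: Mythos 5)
Your proposal is correct and follows essentially the same route as the paper: the paper's proof is exactly to take a (co)fibrant replacement of the identity in the projective structure on $[\mathbf{M},\mathbf{M}]$ furnished by Theorem 8 and to note that a projectively cofibrant diagram is objectwise cofibrant. You merely make explicit the standard justification of that last fact, via the Quillen adjunction $\mathrm{ev}_{X}\dashv G_{X}$ with $G_{X}(B)(Y)=B^{Map_{\mathbf{M}}(Y,X)}$ and axiom \textbf{(SM)}, which the paper leaves implicit.
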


\begin{proof}
Choose a fibrant (cofibrant) replacement of the identity under the projective structure in $[\mathbf{M}, \mathbf{M}]$ and note that a cofibrant diagram is objectwise cofibrant
\end{proof}

\noindent Using lifting properties, Proposition 7 can be extended to the calculus of coends.

\begin{prop}(\cite{Hirsch} \S 18.4)
Let $\mathbf{M}$ be a simplicial model category and $\mathbf{I}$ a small category.
\item If $j: A \rightarrow B$ is an objectwise cofibration of $\mathbf{I}$-diagrams and $i:X \rightarrow Y$ is a cofibration of $\mathbf{I}^{op}$ diagrams of simplicial sets, then the induced map \begin{center}
$\int^{i \in \mathbf{I}}A \otimes Y \amalg_{\int^{i \in \mathbf{I}}A \otimes X} \int^{i\in I}B \otimes X \rightarrow \int^{i \in \mathbf{I}}B \otimes Y$ \end{center}
is a cofibration in $\mathbf{M}$ that is a weak equivalence if either i or j is an objectwise weak equivalence.

\end{prop}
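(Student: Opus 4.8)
The plan is to exhibit the displayed map as the pushout-product of $j$ and $i$ for the coend bifunctor and then to verify the pushout-product axiom by transposing lifting problems into the projective model structure of Theorem 8. Write $A\otimes_{\mathbf{I}}Y:=\int^{i\in\mathbf{I}}A(i)\otimes Y(i)$ for the coend, viewed as a functor of two variables $\otimes_{\mathbf{I}}\colon[\mathbf{I},\mathbf{M}]\times[\mathbf{I}^{op},\mathbf{S}]\to\mathbf{M}$, and let $j\mathbin{\square}i$ denote the canonical map
$$A\otimes_{\mathbf{I}}Y\amalg_{A\otimes_{\mathbf{I}}X}B\otimes_{\mathbf{I}}X\longrightarrow B\otimes_{\mathbf{I}}Y,$$
which is precisely the map in the statement. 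The one structural input I would record first is an adjunction: since $\mathbf{M}$ is complete, cocomplete, tensored and cotensored over $\mathbf{S}$, for each fixed $\mathbf{I}$-diagram $C$ the functor $C\otimes_{\mathbf{I}}(-)$ is left adjoint to $Z\mapsto Map_{\mathbf{M}}(C(-),Z)$, the latter being contravariant in $\mathbf{I}$ and hence an object of $[\mathbf{I}^{op},\mathbf{S}]$. This is the defining adjunction of the weighted colimit, obtained by composing the objectwise tensor--hom adjunction with the coend/end interchange
$$\mathbf{M}(A\otimes_{\mathbf{I}}Y,Z)\cong\int_{i}\mathbf{M}(A(i)\otimes Y(i),Z)\cong[\mathbf{I}^{op},\mathbf{S}](Y,Map_{\mathbf{M}}(A(-),Z)).$$

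Next I would run the standard two-variable pushout-product/pullback-hom correspondence. Using the pushout that defines the source of $j\mathbin{\square}i$ together with naturality of the above adjunction in both $A$ and $B$, a lifting problem of $j\mathbin{\square}i$ against a map $p\colon Z\to W$ of $\mathbf{M}$ transposes into a lifting problem in $[\mathbf{I}^{op},\mathbf{S}]$ of $i\colon X\to Y$ against the pullback-hom map
$$P(j,p)\colon Map_{\mathbf{M}}(B(-),Z)\longrightarrow Map_{\mathbf{M}}(B(-),W)\times_{Map_{\mathbf{M}}(A(-),W)}Map_{\mathbf{M}}(A(-),Z).$$
The only thing to check here is that the transpose of the pushout source is exactly this fibre product, which is a direct diagram chase. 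By construction $P(j,p)$ is, objectwise in $\mathbf{I}$, precisely the map of simplicial sets appearing in axiom (SM) for the cofibration $j(i)$ and the map $p$.

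Now I would invoke Theorem 8: in the projective structure on $[\mathbf{I}^{op},\mathbf{S}]$ the fibrations and weak equivalences are detected objectwise, and $i$ is a cofibration. To see that $j\mathbin{\square}i$ is a cofibration it suffices, by the transposition, that $i$ lift against $P(j,p)$ for every trivial fibration $p$ of $\mathbf{M}$; for such $p$ axiom (SM) makes each component of $P(j,p)$ a trivial fibration of simplicial sets, so $P(j,p)$ is an objectwise, hence projective, trivial fibration, against which the projective cofibration $i$ lifts. For the weak-equivalence clause I would instead test against an arbitrary fibration $p$. If $j$ is an objectwise trivial cofibration then (SM) makes $P(j,p)$ an objectwise trivial fibration and the cofibration $i$ lifts; if instead $i$ is an objectwise weak equivalence then, being also a cofibration, it is a projective trivial cofibration, while (SM) guarantees only that $P(j,p)$ is an objectwise, hence projective, fibration, against which a projective trivial cofibration still lifts. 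In both cases $j\mathbin{\square}i$ has the left lifting property against all fibrations and is a trivial cofibration.

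The genuinely routine content is the diagram chase identifying $P(j,p)$ objectwise with the (SM) map; the only point demanding care, and the step I expect to be the main obstacle, is the bookkeeping of the three variances together with the observation that projective (trivial) fibrations in $[\mathbf{I}^{op},\mathbf{S}]$ are exactly the objectwise ones, since it is this that lets a single transposition dispatch the cofibration clause and both weak-equivalence clauses uniformly. As a fallback I would reduce $i$ to the generating projective (trivial) cofibrations $F_{i_0}(\partial\Delta^n\to\Delta^n)$, respectively $F_{i_0}(\Lambda^n_k\to\Delta^n)$, and use the co-Yoneda isomorphism $C\otimes_{\mathbf{I}}F_{i_0}(S)\cong C(i_0)\otimes S$ to collapse each pushout-product to $j(i_0)\mathbin{\square}(\partial\Delta^n\to\Delta^n)$ in $\mathbf{M}$, which Proposition 7 handles directly; but the adjunction argument is cleaner and avoids the transfinite bookkeeping.
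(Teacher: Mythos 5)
Your proof is correct: the two-variable transposition identifying lifting problems for the pushout-product $j\mathbin{\square}i$ with lifting problems of $i$ against the objectwise (SM) map $P(j,p)$ in the projective structure on $[\mathbf{I}^{op},\mathbf{S}]$ cleanly yields both the cofibration clause and both weak-equivalence clauses. The paper supplies no argument of its own here --- it simply cites Hirschhorn \S 18.4 --- and your adjunction argument (together with the fallback reduction to generating cofibrations via co-Yoneda) is essentially the standard proof found in that reference, so this matches the intended argument rather than departing from it.
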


\begin{cor}(\cite{Hirsch} \S 18.4)
Let $\mathbf{M}$ be a simplicial model category and $\mathbf{I}$ a small category.  
\begin{enumerate}
\item If F is a cofibrant functor $F:\mathbf{I}^{op} \rightarrow \mathbf{S}$ in the projective model structure and $j:G \rightarrow G'$ is a natural transformation which is an objectwise cofibration from $I \rightarrow \mathbf{M}$, then 
$\int^{i \in \mathbf{I}}F(i) \otimes G(i) \rightarrow \int^{i \in \mathbf{I}}F(i) \otimes G'(i)$ is a cofibration that is a weak equivalence if j is. 
 \item If F is a cofibrant functor $F:\mathbf{I}^{op} \rightarrow \mathbf{S}$ and G is an objectwise cofibrant functor from $\mathbf{I} \rightarrow \mathbf{M}$, then 
$\int^{i \in \mathbf{I}}F(i) \otimes G(i)$ is cofibrant.
\item If F is a cofibrant functor $F:\mathbf{I}^{op} \rightarrow \mathbf{S}$ and $j:G \rightarrow G'$ is a natural transformation which is an objectwise weak equivalence of objectwise cofibrant functors from $\mathbf{I} \rightarrow \mathbf{M}$, then 
$\int^{i \in I}F(i) \otimes G(i) \rightarrow \int^{i \in \mathbf{I}}F(i) \otimes G'(i)$ is a weak equivalence.
\item If G is a objectwise cofibrant functor $G:\mathbf{I} \rightarrow \mathbf{M}$ and $j:F \rightarrow F'$ is a natural transformation which is a weak equivalence of cofibrant functors from $\mathbf{I}^{op} \rightarrow \mathbf{S}$, then 
$\int^{i \in \mathbf{I}}F(i) \otimes G(i) \rightarrow \int^{i \in \mathbf{I}}F'(i) \otimes G(i)$ is a weak equivalence of cofibrant objects.
\end{enumerate}
\end{cor}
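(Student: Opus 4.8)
The plan is to derive all four assertions from the two-variable (pushout--product) estimate of the preceding Proposition by feeding it degenerate data in one slot and then, for the two equivalence statements, invoking Ken Brown's lemma. Throughout, recall that for the copower one has $A(i)\otimes\emptyset=\emptyset$ and that the coend of an initial diagram is initial, so plugging an initial object into either variable collapses the Leibniz construction.

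First I would dispose of the two structural claims (2) and (1). For (2), take $A\to B$ to be $\emptyset\to G$ (an objectwise cofibration, since $G$ is objectwise cofibrant) and $X\to Y$ to be $\emptyset\to F$ (a projective cofibration, since $F$ is cofibrant). Both latching pieces of the Proposition's source vanish, so the induced map reduces to $\emptyset\to\int^{i}F(i)\otimes G(i)$; this is a cofibration, i.e. the coend is cofibrant. For (1), keep $X\to Y$ equal to $\emptyset\to F$ but let $A\to B$ be the given objectwise cofibration $j\colon G\to G'$. The pushout in the source is formed along $\int^{i}G(i)\otimes\emptyset=\emptyset$, so it reduces to $\int^{i}F(i)\otimes G(i)$, and the Proposition identifies the Leibniz map as $\int^{i}F(i)\otimes G(i)\to\int^{i}F(i)\otimes G'(i)$, a cofibration that is trivial when $j$ is an objectwise weak equivalence. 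In other words, $\int^{i}F(i)\otimes(-)$ carries objectwise (trivial) cofibrations between objectwise-cofibrant diagrams to (trivial) cofibrations.

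With (1) in hand I would obtain (3) by the simplicial incarnation of Ken Brown's lemma. Factor the objectwise weak equivalence $j\colon G\to G'$ through its natural simplicial mapping cylinder $G\rightarrowtail\mathrm{Cyl}(j)\xrightarrow{\sim}G'$, built objectwise from the cylinder $G(i)\otimes\Delta(1)$ of Proposition 7; since $G$ and $G'$ are objectwise cofibrant, the end inclusion $G\to\mathrm{Cyl}(j)$ is an objectwise trivial cofibration (it is a cofibration, and a weak equivalence because $j$ is) and the retraction $\mathrm{Cyl}(j)\to G'$ is an objectwise weak equivalence. Applying (1) to $G\to\mathrm{Cyl}(j)$ shows $\int^{i}F(i)\otimes G(i)\to\int^{i}F(i)\otimes\mathrm{Cyl}(j)(i)$ is a weak equivalence; applying (1) to the section $G'\to\mathrm{Cyl}(j)$ and using that its composite with the retraction is the identity, two-out-of-three (a special case of the two-out-of-six axiom) forces $\int^{i}F(i)\otimes\mathrm{Cyl}(j)(i)\to\int^{i}F(i)\otimes G'(i)$ to be a weak equivalence as well, whence so is the composite $\int^{i}F(i)\otimes j$. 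Statement (4) is the formal dual: here the variable diagram is the cofibrant weight $j\colon F\to F'$ in $[\mathbf{I}^{op},\mathbf{S}]$ and $G$ is held fixed and objectwise cofibrant. Specializing the Proposition with $A\to B$ equal to $\emptyset\to G$ shows that $\int^{i}(-)\otimes G(i)$ sends projective cofibrations to cofibrations and projective trivial cofibrations to trivial cofibrations; being cocontinuous it is a left adjoint, hence left Quillen, so Ken Brown's lemma gives that it preserves the weak equivalence $j$ between projectively cofibrant weights, with cofibrancy of source and target supplied by (2).

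The one genuine subtlety — and the step I expect to need the most care — is the mismatch in (3) between \emph{objectwise}-cofibrant and \emph{projectively}-cofibrant diagrams: a plain left-Quillen argument only yields homotopy invariance on projectively cofibrant objects, whereas (3) hypothesizes only objectwise cofibrancy. This is exactly why I route (3) through the mapping-cylinder form of Ken Brown's lemma rather than through the projective model structure; the functorial cylinders and the cofibrancy of the end inclusions that this requires are precisely what the simplicial axiom \textbf{(SM)} and Proposition 7 provide. Everything else is bookkeeping with initial objects and two-out-of-three.
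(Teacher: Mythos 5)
Your proposal is correct, but note that the paper does not actually prove this corollary: it is quoted wholesale from Hirschhorn \S 18.4, just like the pushout--product Proposition preceding it, so the derivation you supply is precisely the argument the paper omits. Your route is the standard one and matches how these statements are established in the source: items (1) and (2) are the degenerate specializations of the two-variable statement at the initial object in one slot (using $\emptyset \otimes Y = \emptyset$ and that the coend of an initial diagram is initial, so the Leibniz source collapses), item (4) is Ken Brown's lemma for the functor $\int^{i\in\mathbf{I}}(-)\otimes G(i)$ on the projective model structure on $[\mathbf{I}^{op},\mathbf{S}]$ (whose existence the paper's Theorem 8 guarantees), and item (3) is the mapping-cylinder form of Ken Brown's lemma. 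You correctly isolate the one real subtlety: in (3) the diagrams $G, G'$ are only \emph{objectwise} cofibrant, and $[\mathbf{I},\mathbf{M}]$ need not carry a projective model structure since $\mathbf{M}$ is not assumed cofibrantly generated, so the left-Quillen shortcut is unavailable and the explicit simplicial cylinder of Proposition 7 is exactly what is needed. Two small points of hygiene: in (3), to avoid apparent circularity, first show the section $G' \to \mathrm{Cyl}(j)$ is an objectwise trivial cofibration (it is the cobase change of $G\otimes\{1\} \to G\otimes\Delta(1)$, trivial since $G$ is objectwise cofibrant), deduce the retraction is a weak equivalence by two-out-of-three, and only then conclude the end inclusion $G \to \mathrm{Cyl}(j)$ is a weak equivalence because $j$ is; and in (4), cocontinuity alone does not produce an adjoint --- either exhibit the right adjoint $Map_{\mathbf{M}}(G(-),?)$ explicitly, or simply invoke Ken Brown in the form ``a functor taking trivial cofibrations between cofibrant objects to weak equivalences preserves all weak equivalences between cofibrant objects,'' which your specialization of the Proposition already verifies without any adjointness.
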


Let $Q_{c}:\mathbf{M}\rightarrow \mathbf{M}$ denote the simplicial cofibrant replacement functor in a simplicial model category $\mathbf{M}$,  $Q_{f}$  the simplicial fibrant replacement functor and $Q$ the simplicial fibrant-cofibrant replacement.
Corollary 11 can be used to deduce the well-known Bousfield-Kan model for  homotopy colimits in simplicial model categories.

\begin{thm}
If  $\mathbf{I}$ is an \textit{arbitrary} small category, then the homotopy  colimit of an $\mathbf{I}$ diagram $F$ is given by \begin{center}
$h\varinjlim_{\mathbf{I}} F(X) \sim \int^{i \in \mathbf{I}}Q_{c}F_{i}(X) \otimes N(i\downarrow \mathbf{I})^{op}$ \end{center}
\end{thm}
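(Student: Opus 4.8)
The plan is to recognise the formula as the derived form of the coend presentation of an ordinary colimit and then to let Corollary 11 carry out the homotopical bookkeeping. First I would record that, for $F\colon\mathbf{I}\to\mathbf{M}$, the colimit is a weighted colimit $\varinjlim_{\mathbf{I}}F\cong\int^{i\in\mathbf{I}}\ast\otimes F_i$, where $\ast\colon\mathbf{I}^{op}\to\mathbf{S}$ is the constant diagram on the one-point simplicial set; tensoring by $\ast$ is the identity, so the coend collapses to the colimit. Since $\varinjlim_{\mathbf{I}}$ is left Quillen for the projective structure of Theorem 8 (its right adjoint, the constant-diagram functor, preserves the objectwise fibrations and trivial fibrations), the homotopy colimit is the total left derived functor, computed as $\varinjlim_{\mathbf{I}}(Q_{\mathrm{proj}}F)$ for a projective cofibrant replacement $Q_{\mathrm{proj}}F$. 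The whole problem is thus to identify the explicit coend $\int^{i}N(i\downarrow\mathbf{I})^{op}\otimes Q_cF_i$ with this derived functor, which I would do by recognising $i\mapsto N(i\downarrow\mathbf{I})^{op}$ as a projective cofibrant replacement of the constant weight $\ast$.

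Two facts are needed about the candidate weight $W$, with $W(i)=N(i\downarrow\mathbf{I})^{op}$; note it is contravariant in $i$, since a map $i\to i'$ induces the precomposition functor $i'\downarrow\mathbf{I}\to i\downarrow\mathbf{I}$. For the weak equivalence $W\xrightarrow{\sim}\ast$: each comma category $i\downarrow\mathbf{I}$ has the initial object $\mathrm{id}_i$, so its nerve, and hence its opposite, is contractible, and the collapse maps assemble into a natural transformation that is an objectwise, thus projective, weak equivalence. For cofibrancy: in simplicial degree $n$ the functor $i\mapsto N(i\downarrow\mathbf{I})_n$ is the coproduct $\coprod_{j_0\to\cdots\to j_n}\mathbf{I}(-,j_0)$ of representables, and the skeletal filtration of the nerve exhibits $W$ as built by attaching cells of the form $\mathbf{I}(-,j_0)\otimes(\partial\Delta^n\hookrightarrow\Delta^n)$; these are generating projective cofibrations, so $W$ is cofibrant in the projective structure on $[\mathbf{I}^{op},\mathbf{S}]$.

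With the weight understood I would assemble the comparison in two moves, both licensed by Corollary 11. Because $W$ is cofibrant and $Q_cF$ is objectwise cofibrant, Corollary 11(2) shows the coend is a cofibrant object of $\mathbf{M}$, while Corollary 11(3) shows that, with $W$ fixed, the coend sends objectwise weak equivalences of objectwise-cofibrant diagrams to weak equivalences; applied to an objectwise equivalence between $Q_cF$ and $Q_{\mathrm{proj}}F$ this gives $\int^{i}W\otimes Q_cF_i\simeq\int^{i}W\otimes(Q_{\mathrm{proj}}F)_i$. It then remains to check that against a projectively cofibrant diagram the coend with $W$ agrees with the colimit, which I would prove by cellular induction: on a free generator $\mathbf{I}(j,-)\otimes A$ the co-Yoneda reduction $\int^{i}\mathbf{I}(j,i)\otimes W(i)\cong N(j\downarrow\mathbf{I})^{op}\simeq\ast$ identifies the coend with $A\cong\varinjlim_{\mathbf{I}}(\mathbf{I}(j,-)\otimes A)$, and Corollary 11 propagates this equivalence across the pushouts and transfinite compositions of the cell structure. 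Combining the two moves gives $\int^{i}N(i\downarrow\mathbf{I})^{op}\otimes Q_cF_i\simeq\varinjlim_{\mathbf{I}}(Q_{\mathrm{proj}}F)\simeq h\varinjlim_{\mathbf{I}}F$, and evaluating at $X$ yields the stated formula.

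The step I expect to be the main obstacle is the projective cofibrancy of $W=N(-\downarrow\mathbf{I})^{op}$. Turning the degreewise coproduct-of-representables description into a genuine projective cell presentation amounts to checking that the nerve is Reedy cofibrant as a simplicial diagram of projectively cofibrant functors, i.e.\ that its latching maps are projective cofibrations; this is the one place where combinatorial bookkeeping is unavoidable rather than a formal consequence of the coend calculus of Corollary 11.
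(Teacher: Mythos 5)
Your proposal is correct, and its two pillars are exactly the ones the paper's proof rests on: the homotopy-invariance statements of Corollary 11 and the projective cofibrancy of the weight $N(-\downarrow\mathbf{I})^{op}$ in $[\mathbf{I}^{op},\mathbf{S}]$. The difference is one of completeness rather than strategy. The paper's proof is two sentences: it invokes Corollary 11 together with the cofibrancy of the weight for the homotopy properties of the formula, and then explicitly defers ``the desired universal property'' --- the identification of the coend with the derived colimit --- to Hirschhorn and Shulman. You instead prove that identification yourself: you observe that $\varinjlim_{\mathbf{I}}$ is left Quillen for the projective structure of Theorem 8, recognise $N(-\downarrow\mathbf{I})^{op}\to\ast$ as a projectively cofibrant replacement of the constant weight (contractibility from the initial object $\mathrm{id}_i$ of $i\downarrow\mathbf{I}$, cofibrancy from the cell structure with free cells $\mathbf{I}(-,j_0)\otimes(\partial\Delta^n\hookrightarrow\Delta^n)$), and then compare $\int^{i}W\otimes(-)$ with $\varinjlim$ on projectively cofibrant diagrams by cellular induction, the base case being the co-Yoneda reduction $\int^{i}\mathbf{I}(j,i)\otimes W(i)\cong W(j)\simeq\ast$. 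This buys a self-contained argument where the paper has a citation, at the cost of the combinatorial check you rightly flag as the crux: that the skeletal filtration of the nerve really is a projective cell presentation, which holds because a chain in $N(i\downarrow\mathbf{I})$ is degenerate exactly when some $j_k\to j_{k+1}$ is an identity, so the nondegenerate simplices split off as a coproduct of representables and the latching maps are projective cofibrations (this is the content of Hirschhorn's treatment that the paper's one-line assertion of cofibrancy is pointing at). Two small bookkeeping remarks: since the paper's $Q_c$ in the statement is the \emph{objectwise} (simplicial) cofibrant replacement, there is no direct natural map between $Q_cF$ and a projective cofibrant replacement $Q_{\mathrm{proj}}F$, so your appeal to Corollary 11(3) should run through the zigzag $Q_cQ_{\mathrm{proj}}F\to Q_{\mathrm{proj}}F$ and $Q_cQ_{\mathrm{proj}}F\to Q_cF$ (both legs are objectwise equivalences of objectwise cofibrant diagrams, so Corollary 11(3) applies twice); and in the base case of the induction the equivalence $N(j\downarrow\mathbf{I})^{op}\otimes A\to A$ needs $A$ cofibrant, which Proposition 7(3) supplies. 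Neither affects the soundness of the sketch.
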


\begin{proof}
The homotopy properties of the above formulas follow from corollary 11 and noting that $N(-\downarrow \mathbf{I})^{op}$ is cofibrant  in $[\mathbf{I}^{op},\mathbf{S}]$.  With some additional work, we are able to show the desired universal property, see \cite{Hirsch} and \cite{EnrMod}.
\end{proof}

\noindent For our purposes, this local formula for the homotopy colimit is favorable over the typical Quillen left-derived notion as it allows us to immediately deduce many desirable properties.  For example, this formula implies homotopy  colimits in simplicial model categories share many of the same properties of those in $\mathbf{S}$ itself, e.g., the homotopy cofinality condition of \cite{Hirsch}. Familiar properties for homotopy colimits are essential for the generalization of homotopy calculus given in \cite{gwtowers}.  The dual statements regarding the homotopy properties of the cotensor are obvious.  Thus, one is able to express the homotopy limits as an end \\ 
$h\varprojlim_{\mathbf{I}} F(X) \sim \int_{i \in \mathbf{I}}Q_{f}F_{i}(X)^{N(i\downarrow \mathbf{I})}$.

\subsection{Homotopy Calculus}
Homotopy calculus is a powerful theoretical tool which displays the homotopy theoretic information of a functor in a convenient form.   Arone and Mahowald \cite{perhom} provide a striking example of its utility in solving fundamental problems in homotopy theory.
Here we present a quick overview of the basic notions developed by Goodwillie in \cite{CalcII}  and \cite{CalcIII}.  Although originally developed for functors from topological spaces to spectra, this machinery can be generalized to homotopical categories with a good notion of homotopy colimit and homotopy limit.  As pointed out by Kuhn \cite{gwtowers}, this includes cofibrantly generated simplicial model categories where we have explicit models for homotopy limits and colimits.  
The basic idea is to approximate homotopy functors between simplicial model categories by functors satisfying higher order excision properties.  The definition and construction of such functors depends on a calculi of cubical diagrams \cite{CalcII}.  We remind the reader of the basic definitions and results.
\begin{defn}
Let S be a finite set of cardinality n and \textbf{P}(S) the poset of all subsets considered as a category.  An \textit{n-cube} in a model category \textbf{C} is a functor $\chi:\mathbf{P}(S) \rightarrow \mathbf{C}$.  
\end{defn}
\noindent We will be comparing the initial space $\chi (\varnothing)$ with the homotopy limit of the functor restricted to $\chi:\mathbf{P}(S) - \varnothing=\mathbf{P}(S)_{0}$, as well as the dual notion. 

\begin{defn} 
\begin{enumerate}
\item A n-cube $\chi$ is \textit{cartesian} if the natural map $\chi(\varnothing) \rightarrow h\varprojlim_{P_{0}(S)} \chi$ is a weak equivalence.  
\item Let $\mathbf{P}(S)_{1}=\mathbf{P}(S)-S$,  an n-cube is \textit{cocartesian} if the natural map $h\varinjlim_{\mathbf{P}_{1}(S)}\chi \rightarrow \chi (S)$ is a weak equivalence.
\item If $\chi |_{\mathbf{P}(T)}$ is cocartesian for all $T \subset S$ and $|T| \ge 2$, then $\chi$ is \textit{strongly cocartesian}. 
\end{enumerate}
\end{defn}

\noindent Let \textbf{C}  be an arbitrary simplicial model category.

\begin{defn}
A homotopy functor $F:\mathbf{C} \rightarrow \mathbf{Sp}$ is \textit{n-excisive} if for every strongly cocartesian (n+1)-diagram $\chi:\mathbf{P}(S) \rightarrow \mathbf{C}$ the induced diagram of spectra $F \circ \chi$ is cartesian.
\end{defn}

\noindent The notion of cartesian here is unnecessary as $\mathbf{Sp}$ is stable, i.e., a cubical diagram is cartesian if and only if it is cocartesian.  Indeed, the calculus of functors with stable codomain only requires the notion of homotopy colimit.

 In \cite{CalcIII}, Goodwillie constructs a universal n-excisive approximation to F, denoted  $P_{n}F$, based on formal homotopical arguments. For example, if $F$ evaluated on the final object is weakly equivalent to the final object, $F(*)\sim *$, then $P_{1}F(X)=\Omega^{\infty}F(\Sigma^{\infty}X)$.  This is the analog of the Dold-Puppe stabilization in the homological setting (see \cite{linear}).  As n varies we form  the `Maclaurin tower' of a functor.

\begin{thm}( \cite{CalcIII} 1.13)
A homotopy functor $F:\mathbf{C} \rightarrow \mathbf{Sp}$ determines a tower of functors together with natural maps \\
\begin{diagram}
	&&							&\dTo 					&	\\
	&&							&P_{n}F(X)				&	\\
	&&\ruTo(3,6)^{p_{n}F}			&\dTo_{q_{n}F} 			&	\\
	&&							&P_{n-1}F(X)				&	\\
	&&\ruTo(3,4)^{p_{n-1}F}			&\dDotsto 				&	\\
	&&					    		&P_{1}F(X) 				&	\\
	&&\ruTo(3,2)^{p_{1}F}			&\dTo_{q_{1}F}				& 	\\
F(X) &&\rTo^{p_{0}F}				&P_{0}F(X)				&\sim F(*) \\
\end{diagram}
inducing a natural map $q_{\infty}:F \rightarrow lim_{n}P_{n}F$.  The functor $P_{n}F$ is n-excisive and the map $p_{n}F:F \rightarrow P_{n}F$ is universal among such maps with domain F and codomain a n-excisive homotopy functor.  
\end{thm}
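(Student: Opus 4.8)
The plan is to follow Goodwillie's construction from \cite{CalcIII}, adapted to the simplicial model category setting by means of the explicit homotopy (co)limit models of Theorem 12 and its dual. The central device is an auxiliary endofunctor $T_n$ on homotopy functors $\mathbf{C} \to \mathbf{Sp}$ together with a natural transformation $t_n F : F \to T_n F$; the stages $P_n F$ are then produced as the homotopy colimit of the sequential iterates of $t_n F$, and essentially all the content lies in a recognition criterion and a convergence argument for this colimit.

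First I would construct $T_n$. For an object $X$ of $\mathbf{C}$ and a nonempty $U \subseteq \underline{n+1} := \{1, \dots, n+1\}$, form the join $X * U$ with the $|U|$ cone points indexed by $U$ (using the simplicial tensoring on $\mathbf{C}$), so that $U \mapsto X * U$ is a strongly cocartesian $(n+1)$-cube with initial vertex $X$. Set
$$T_n F(X) := h\varprojlim_{\varnothing \neq U \subseteq \underline{n+1}} F(X * U),$$
computed with the end model dual to Theorem 12, and let $t_n F : F \to T_n F$ be the comparison of the initial vertex with the homotopy limit over the punctured cube. Since these models are pointwise and built from cotensors with nerves, $T_n$ preserves pointwise weak equivalences and so descends to the homotopy category of homotopy functors $\mathbf{C} \to \mathbf{Sp}$. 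I would then define
$$P_n F := h\varinjlim\bigl(F \xrightarrow{t_n F} T_n F \xrightarrow{t_n T_n F} T_n^2 F \to \cdots\bigr),$$
using the coend model of Theorem 12, with $p_n F : F \to P_n F$ the structure map into the colimit.

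The two substantive steps are recognition and convergence. For recognition I would show that a homotopy functor $F$ is $n$-excisive if and only if $t_n F$ is a pointwise weak equivalence: this is almost a tautology, since $U \mapsto X * U$ is the universal strongly cocartesian $(n+1)$-cube with initial vertex $X$, so $F$ carries every such cube to a cartesian one exactly when $F(X) \to T_n F(X)$ is an equivalence for all $X$. For convergence I must prove that $P_n F$ is $n$-excisive, equivalently that $t_n$ becomes an equivalence in the colimit. The key input is that the homotopy limit defining $T_n$ is \emph{finite} whereas $P_n$ is a \emph{filtered} homotopy colimit; because the target $\mathbf{Sp}$ is stable, filtered homotopy colimits commute with finite homotopy limits, so $T_n$ commutes with the defining colimit up to equivalence and one concludes that $t_n(P_n F)$ is an equivalence. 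This commutation---which in the stable setting replaces Goodwillie's connectivity estimates---is the step I expect to be the main obstacle, and it is precisely where the explicit Bousfield--Kan models and the cofinality properties recorded after Theorem 12 are used.

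Finally I would extract the universal property and assemble the tower. The recognition criterion renders $P_n$ homotopy idempotent, $P_n(P_n F) \sim P_n F$, and shows that $G$ is $n$-excisive iff $G \to P_n G$ is an equivalence. Given any map $F \to G$ with $G$ an $n$-excisive homotopy functor, applying $P_n$ and composing with the inverse of $G \xrightarrow{\sim} P_n G$ yields a factorization through $p_n F$, while idempotency forces uniqueness up to homotopy; this is the claimed universality. For the tower maps $q_n F : P_n F \to P_{n-1} F$ I would use that every $(n-1)$-excisive functor is $n$-excisive, so $P_{n-1} F$ is $n$-excisive and the universality of $p_n F$ produces $q_n F$ with $q_n F \circ p_n F \simeq p_{n-1} F$; compatibility of the $q_n F$ and of the induced map $q_\infty : F \to \lim_n P_n F$ then follow from the uniqueness clause.
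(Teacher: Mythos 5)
The gap is in your recognition step, and it sits exactly at the point your plan dismisses as ``almost a tautology.'' That $t_nF$ is a pointwise weak equivalence says only that $F$ carries the \emph{particular} strongly cocartesian cubes $U \mapsto X * U$ to cartesian cubes. A general strongly cocartesian $(n+1)$-cube is generated by arbitrary maps (cofibrations up to equivalence) $X \to Y_s$, not by the cone inclusions $X \to X * \{s\}$, and the join cube is not ``universal'' among such cubes in any sense that would transfer cartesianness: there is no map of cubes over $X$ comparing a general strongly cocartesian cube with the join cube in the direction you need. The hard implication of your biconditional ($t_nF$ an equivalence $\Rightarrow$ $F$ is $n$-excisive) is in fact true, but only as a \emph{corollary} of the full theorem (if $t_nF$ is an equivalence then $F \sim P_nF$, which is $n$-excisive); invoking it to prove that $P_nF$ is $n$-excisive is circular. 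What Goodwillie actually proves at this point (\cite{CalcIII}, Lemma 1.9) is a factorization statement: for an \emph{arbitrary} strongly cocartesian $(n+1)$-cube $\mathcal{X}$, the natural map of cubes $F\circ\mathcal{X} \to T_nF\circ\mathcal{X}$ factors through a cartesian cube, the geometric input being the collapse maps $\mathcal{X}(T\cup U) \to \mathcal{X}(T) * U$ that crush the pieces attached along $U$ to cone points. Excisiveness of $P_nF$ then follows because a sequential homotopy colimit of cubes whose bonding maps factor through cartesian cubes is cartesian when, as here with target $\mathbf{Sp}$, finite homotopy limits commute with sequential homotopy colimits. This factorization lemma is the genuine content of the proof and is entirely absent from your proposal; you have also misplaced the expected difficulty, since the stable commutation you flag as ``the main obstacle'' is the easy part in the spectral setting.

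The rest of your argument is sound and follows Goodwillie's construction as adapted to simplicial model categories via the Bousfield--Kan models (as in \cite{gwtowers}): the definition of $T_n$ by fiberwise joins and its pointwise homotopy invariance, the telescope defining $P_nF$, the observation that $t_n(P_nF)$ is an equivalence by commuting the finite punctured-cube holim past the sequential hocolim, homotopy idempotency, universality of $p_nF$ from the easy direction of recognition ($n$-excisive $\Rightarrow$ $p_nG$ an equivalence), and the construction of $q_nF$ and $q_\infty$ from uniqueness. All of this goes through verbatim once Lemma 1.9 is supplied; without it, your proof establishes only that $P_nF$ takes join cubes to cartesian cubes, which is strictly weaker than $n$-excision.
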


As a functor, $P_{n}$ is naturally simplicial and homotopy invariant when restricted to simplicial homotopy functors.
Denote the homotopy fiber of the map $p_{n}F:P_{n}F(X) \rightarrow P_{n-1}F(X)$ as $D_{n}F(X)$.  By construction, the functors $P_{n}$ and $D_{n}$ preserve \textit{all} homotopy colimits when $F$ has stable codomain.
 
 \begin{defn}
 We call a functor \textit{n-reduced} if $P_{n-1}F \sim *$ and \textit{n-homogeneous} if both n-reduced and n-excisive.
\end{defn}
 
\noindent It is easy to show $D_{n}F$ is always n-homogeneous and we use the term linear synonymously with 1-homogeneous   .

The notion of excisive readily extends to the simplicial model category $\mathbf{(C)^{n}}$, suggesting the notion of a multilinear functor.
Let $\mathcal{H}_{n}[\mathbf{C}, \mathbf{Sp}]$ denote the category of n-homogeneous functors with homotopical structure levelwise; analogously,
$\mathcal{L}_{n}[\mathbf{C},\mathbf{Sp}]$ is the category of symmetric n-multilinear functors with obvious homotopical structure.  Note the diagonal functor $\Delta_{n}:\mathcal{L}_{n}[\mathbf{C},\mathbf{Sp}]\rightarrow \mathcal{H}_{n}[\mathbf{C}, \mathbf{Sp}]$ is homotopy invariant.
When $\mathbf{C}$ is pointed as a model category, the cross-effect functor defines a homotopical inverse.

\begin{thm}(\cite{CalcIII} 3.5)
The functors\\
\begin{center}
$\Delta_{n}:\mathcal{L}_{n}[\mathbf{C},\mathbf{Sp}]\rightarrow \mathcal{H}_{n}[\mathbf{C}, \mathbf{Sp}]$\\
$cr_{n}:\mathcal{H}_{n}[\mathbf{C},\mathbf{Sp}]\rightarrow \mathcal{L}_{n}[\mathbf{C}, \mathbf{Sp}]$
\end{center}
naturally induce equivalences of homotopy categories.
 \end{thm}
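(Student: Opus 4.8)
The plan is to prove that $\Delta_{n}$ and $cr_{n}$ are homotopy inverse by producing natural weak equivalences for the two composites, after first checking that each functor lands in the correct category. Since $\Delta_{n}$ is already observed to be homotopy invariant and $cr_{n}$ is assembled from total homotopy fibers of cubes, both descend to the homotopy categories; by Corollary 9 the simplicial cofibrant replacement lets us compute the derived functors pointwise on cofibrant objects, so it suffices to produce the comparisons there. Throughout, the stability of $\mathbf{Sp}$ is the decisive structural input: it identifies total fibers with total cofibers, lets cross-effects commute with the homotopy colimits of Theorem 12, and turns finite wedges into finite products.

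First I would check that the functors are well defined on objects. For $cr_{n}$, symmetry of $cr_{n}H$ comes from the evident $\Sigma_{n}$-action permuting the coordinates of the defining $n$-cube $S \mapsto H(\bigvee_{i\in S}X_{i})$; reducedness in each variable is automatic from the total-fiber construction; and linearity in each variable follows because taking one further cross-effect lowers the degree of excision by one, so the $n$-th cross-effect of an $n$-excisive functor is $1$-excisive in each slot. Hence $cr_{n}$ maps $\mathcal{H}_{n}[\mathbf{C},\mathbf{Sp}]$ into $\mathcal{L}_{n}[\mathbf{C},\mathbf{Sp}]$. For $\Delta_{n}$, the diagonal of a symmetric multilinear $L$ is $n$-excisive because a functor linear in each of $n$ variables becomes $n$-excisive after restriction to the diagonal, and it is $n$-reduced because the lower excisive approximations of such a diagonal vanish; thus $\Delta_{n}L$ is $n$-homogeneous.

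The heart of the argument is the computation of the composites. Expanding by multilinearity yields
$$L\Bigl(\bigvee_{i\in S}X_{i},\dots,\bigvee_{i\in S}X_{i}\Bigr)\;\simeq\;\bigvee_{f\colon\{1,\dots,n\}\to S}L\bigl(X_{f(1)},\dots,X_{f(n)}\bigr),$$
and the total homotopy fiber of this $n$-cube isolates exactly the summands indexed by the surjective $f$, that is, the bijections, so that
$$cr_{n}\,\Delta_{n}L\,(X_{1},\dots,X_{n})\;\simeq\;\bigvee_{\sigma\in\Sigma_{n}}L\bigl(X_{\sigma(1)},\dots,X_{\sigma(n)}\bigr)\;\simeq\;\Sigma_{n+}\wedge L,$$
a free $\Sigma_{n}$-object on $L$. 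The diagonal must therefore be interpreted in the homotopically correct, \emph{derived} sense $L \mapsto (L(X,\dots,X))_{h\Sigma_{n}}$, after which the free-orbit identification $(\Sigma_{n+}\wedge L)_{h\Sigma_{n}}\simeq L$ (valid because the action is free and cross-effects commute with the orbit homotopy colimit) gives $cr_{n}\Delta_{n}L\simeq L$ in $\mathcal{L}_{n}[\mathbf{C},\mathbf{Sp}]$. Dually, for $n$-homogeneous $H$ the vanishing of $P_{n-1}H$ forces every lower cross-effect to be trivial, so only the top term of the diagonalized cross-effect survives and the natural comparison $\Delta_{n}cr_{n}H\to H$ is a weak equivalence.

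The main obstacle is precisely this bookkeeping of the $\Sigma_{n}$-action and the resulting $n!$-fold multiplicity: the naive point-set diagonal produces a free $\Sigma_{n}$-object rather than $L$ on the nose, and reconciling it forces one to take homotopy orbits, an operation whose correctness rests on the stability of $\mathbf{Sp}$ (so that total fibers agree with total cofibers and cross-effects pass through the orbit homotopy colimit). A secondary technical point, supplied by the machinery of \S 2.3, is that cofibrant replacement can be taken simplicially (Corollary 9), so that all of these total fibers, wedges, and homotopy orbits are genuinely homotopy-meaningful and the two functors descend to inverse equivalences of the localized categories.
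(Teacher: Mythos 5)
A point of calibration first: the paper does not prove this statement at all---it is recalled verbatim from Goodwillie (\cite{CalcIII}, Theorem 3.5) as background in \S 2.4---so your attempt has to be measured against Goodwillie's own argument rather than anything in the text. Measured that way, the first half of your sketch is essentially faithful: the expansion $\Delta_n L(\bigvee_{i\in S}X_i)$ into summands indexed by functions $f\colon\{1,\dots,n\}\to S$, the observation that the total fiber retains exactly the bijections, the identification $cr_n\Delta_n L\simeq \Sigma_{n+}\wedge L$, and above all your recognition that the bare diagonal cannot be the inverse and must be replaced by the derived diagonal $L\mapsto (L(X,\dots,X))_{h\Sigma_n}$ are all correct and are exactly how Goodwillie's equivalence is structured; this reading is also the one consistent with the paper's later formula $D_nF(X)\sim D^{(n)}F(X,\dots,X)_{h\Sigma_n}$. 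Your appeals to stability (total fibers agree with total cofibers, so cross-effects pass through homotopy orbits, and finite wedges agree with finite products) are the right structural inputs for that half.

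The genuine gap is the other composite. Your justification---``the vanishing of $P_{n-1}H$ forces every lower cross-effect to be trivial, so only the top term survives''---is false. For any reduced functor one has $cr_1H=H$, and for the $n$-homogeneous functor $H(X)=\Sigma^{\infty}(X^{\wedge n})$ with $n\ge 2$ the expansion of $(X\vee Y)^{\wedge n}$ shows $cr_2H(X,Y)$ contains nontrivial mixed summands. What homogeneity kills is the \emph{multilinearization} of the lower cross-effects, equivalently the lower layers $D_kH\simeq *$ for $k<n$, and that weaker statement does not let you conclude by inspection of summands that $\Delta_n cr_n H\to H$ is an equivalence. This direction is the substantive content of Goodwillie's theorem and needs a real argument: for instance, form the $\Sigma_n$-equivariant composite $cr_nH(X,\dots,X)\to H(\bigvee_n X)\to H(X)$ (inclusion followed by fold), pass to homotopy orbits, note that both source and target are $n$-homogeneous, and reduce to checking the map on $n$-th cross-effects---using that an $n$-excisive functor with vanishing $n$-th cross-effect is $(n-1)$-excisive, so that the fiber of a map of $n$-homogeneous functors inducing an equivalence on $cr_n$ is $(n-1)$-excisive and $n$-reduced, hence trivial. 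One also needs the stable splitting $H(X\vee Y)\simeq H(X)\vee H(Y)\vee cr_2H(X,Y)$ and its iterates to control the equivariant bookkeeping. Without some such input your proof establishes only one of the two composites, which does not yield an equivalence of homotopy categories.
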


Goodwillie \cite{CalcIII} refers to the multilinear functor corresponding to $D_{n}F$, denoted $D^{(n)}$F, as the differential.  Although a base point on the domain category is required to define the cross-effect, the following shows the classification still holds in the non-based case.  Let  $\phi:\mathbf{S}_{*} \rightarrow \mathbf{S}$
denote the forgetful functor.
\begin{thm}( \cite{CalcIII} 4.1)
The functors, induced by $\phi$,  \\
\begin{center}
$\phi^{*}:\mathcal{H}_{n}[\mathbf{S},\mathbf{Sp}] \rightarrow \mathcal{H}_{n}[\mathbf{S}_{*}, \mathbf{Sp}]$, \\
$\phi^{*}:\mathcal{L}_{n}[\mathbf{S},\mathbf{Sp}] \rightarrow \mathcal{L}_{n}[\mathbf{S}_{*},\mathbf{Sp}]$\end{center}
induce equivalences of homotopy categories.
\end{thm}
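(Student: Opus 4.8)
The plan is to show that $\phi^{*}$ is fully faithful and essentially surjective on the relevant homotopy categories, the single geometric input being the behaviour of $\phi$ on suspension spectra. First I would check that $\phi^{*}$ is well defined. Since $\mathbf{S}_{*}$ is the coslice category $*\backslash\mathbf{S}$, the forgetful functor $\phi$ creates, hence preserves, connected homotopy colimits and all weak equivalences; the punctured cubes and joins entering the construction of $T_{n}$ are connected, so $\phi^{*}$ carries $n$-excisive functors to $n$-excisive functors, preserves the value on the terminal object, and commutes up to natural weak equivalence with $P_{n}$, $D_{n}$, and the diagonal $\Delta_{n}$. In particular $\phi^{*}$ restricts to simplicial, homotopy-invariant functors $\mathcal{H}_{n}[\mathbf{S},\mathbf{Sp}]\to\mathcal{H}_{n}[\mathbf{S}_{*},\mathbf{Sp}]$ and $\mathcal{L}_{n}[\mathbf{S},\mathbf{Sp}]\to\mathcal{L}_{n}[\mathbf{S}_{*},\mathbf{Sp}]$.

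The geometric heart is the natural cofiber sequence of pointed spaces $S^{0}\to(\phi Y)_{+}\to Y$, obtained by collapsing the added disjoint basepoint onto the basepoint of $Y$; it is split after one suspension, yielding a natural stable splitting $\Sigma^{\infty}(\phi Y)_{+}\simeq\mathbb{S}\vee\Sigma^{\infty}Y$. For essential surjectivity I would use the adjunction $(-)_{+}\dashv\phi$, which on functor categories gives $\phi^{*}\dashv((-)_{+})^{*}$; the right adjoint fails to be reduced, so I correct it by $D_{n}$ and take $F:=D_{n}\big(((-)_{+})^{*}G\big)$ as the candidate preimage of a given $n$-homogeneous $G$. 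Because $\phi^{*}$ commutes with $D_{n}$, we have $\phi^{*}F\simeq D_{n}\big(Y\mapsto G((\phi Y)_{+})\big)$, and the splitting identifies $\Sigma^{\infty}((\phi Y)_{+})^{\wedge n}$ with $(\mathbb{S}\vee\Sigma^{\infty}Y)^{\wedge n}$, whose top wedge summand is $(\Sigma^{\infty}Y)^{\wedge n}$ and whose remaining summands are functors of $Y$ of strictly lower degree. The latter are annihilated by $D_{n}$, so $\phi^{*}F\simeq G$, giving essential surjectivity; the same computation carried out in each variable separately handles the multilinear case.

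For full faithfulness I would reduce to the classification of homogeneous functors by their derivatives. The previous theorem gives this classification over the pointed category $\mathbf{S}_{*}$, identifying morphisms in $Ho(\mathcal{H}_{n}[\mathbf{S}_{*},\mathbf{Sp}])$ with morphisms of $n$-th derivatives as spectra with $\Sigma_{n}$-action; since $\Delta_{n}$ commutes with $\phi^{*}$ and $cr_{n}$ is its inverse over $\mathbf{S}_{*}$, it suffices to know that $\phi^{*}$ is the identity on $n$-th derivatives. This is immediate from the splitting, as passing to the top smash summand is precisely extraction of the $n$-th derivative and is unaffected by the extra $\mathbb{S}$. The $\Sigma_{n}$-equivariance of everything follows because $(-)_{+}$ sends coproducts to wedges compatibly with the symmetric monoidal structure, so the splitting on the $n$-fold smash power is $\Sigma_{n}$-equivariant; hence $\phi^{*}$ induces isomorphisms on morphism sets and is an equivalence on both $\mathcal{H}_{n}$ and $\mathcal{L}_{n}$.

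The step I expect to be the main obstacle is establishing the unpointed analogue of the derivative classification that underwrites full faithfulness. The difficulty is structural rather than computational: the cross-effect $cr_{n}$, and the statement that a homogeneous functor is determined by its derivative, rest on the zero object of $\mathbf{S}_{*}$, which $\mathbf{S}$ lacks, so a priori there is no intrinsic derivative on the unpointed side to compare against. The splitting $\Sigma^{\infty}(\phi Y)_{+}\simeq\mathbb{S}\vee\Sigma^{\infty}Y$ is the device that substitutes for the missing basepoint, and the crux is to verify that after applying $D_{n}$ it yields a natural, $\Sigma_{n}$-equivariant weak equivalence rather than a mere retract. This is exactly the point at which the finitariness of the functors and the stability of $\mathbf{Sp}$, which makes the cofiber sequence a fiber sequence, become indispensable.
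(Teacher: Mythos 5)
You should first know what you are being compared against: the paper contains \emph{no} proof of this statement. It is imported verbatim from Goodwillie (Calculus III, Theorem 4.1), so your attempt can only be measured against the cited source and its own internal soundness. On that score, your preliminary reductions and your essential-surjectivity half are correct and are essentially the standard route: $\phi$ creates connected (homotopy) colimits in the coslice category, the joins $Y\ast U$ in the construction of $T_{n}$ have the same underlying unbased joins, so $\phi^{*}$ commutes with $T_{n}$, $P_{n}$, $D_{n}$, and the candidate inverse $\Psi(G):=D_{n}\bigl(((-)_{+})^{*}G\bigr)$ together with the stable splitting $\Sigma^{\infty}(Y_{+})\simeq \mathbb{S}\vee \Sigma^{\infty}Y$ does give $\phi^{*}\Psi(G)\simeq G$, since $D_{n}$ annihilates the wedge summands of degree below $n$. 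One blemish: expanding $G(Y_{+})$ as $(C\wedge(\mathbb{S}\vee\Sigma^{\infty}Y)^{\wedge n})_{h\Sigma_{n}}$ invokes the evaluation classification (Theorem 18 here), which requires $G$ finitary; that is harmless in this paper, whose functionals are finitary, but the theorem as cited carries no such hypothesis. You can avoid it by running the same splitting through the cross-effect $cr_{n}G$: the spacelevel retraction $Y_{+}\to S^{0}$ splits the fiber sequence that multilinearity produces in each variable, with no smash-power model needed.

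The genuine gap is the one you flag yourself, and it is real: your full-faithfulness argument is circular. Saying ``it suffices to know that $\phi^{*}$ is the identity on $n$-th derivatives'' controls morphisms only in the \emph{target} $Ho(\mathcal{H}_{n}[\mathbf{S}_{*},\mathbf{Sp}])$; to identify morphisms in the source you would need an unpointed derivative classification, which is exactly the theorem being proven. But no such classification is needed, and your own constructions already suffice. The inclusion $X\hookrightarrow X\sqcup\ast$ gives a natural map $\eta_{F}:F\simeq D_{n}F\to \Psi(\phi^{*}F)$ for $n$-homogeneous unbased $F$. First, $\phi^{*}\eta_{F}$ is an equivalence: the composite $\phi Y\to\phi(Y_{+})\to\phi Y$ (the second map underlying the based projection $Y_{+}\to Y$) is the identity, so $\phi^{*}\eta_{F}$ is precisely the section picked out by your splitting computation applied to $G=\phi^{*}F$. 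Second, $\phi^{*}$ \emph{reflects} equivalences between $n$-excisive functors: every nonempty simplicial set has a $0$-simplex and hence lies in the image of $\phi$, while the value at $\varnothing$ is recovered as a homotopy limit of values on nonempty finite sets by applying $n$-excisiveness to the strongly cocartesian $(n+1)$-cube generated by the maps $\varnothing\to\ast$. Hence $\eta$ is a natural equivalence, $\Psi$ and $\phi^{*}$ are mutually inverse on homotopy categories, and full faithfulness follows formally rather than through any derivative comparison; your closing paragraph's structural worry dissolves once the proof is organized this way.
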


Symmetric multilinear functors naturally correspond to spectra with a $\Sigma_{n}$-symmetry provided the  functor is \textit{finitary}, i.e., determined by its value on finite subcomplexes.  Let $\Sigma_{n}$-$\mathbf{Sp}$ denote the category with objects spectra with a $\Sigma_{n}$-action and morphisms equivariant maps.  Define a morphism of  $\Sigma_{n}$-spectra to be a weak equivalence if as a morphism of spectra it is a weak equivalence.
 
 \begin{thm}(\cite{CalcIII})
Evaluation on 0-spheres defines an equivalence between the homotopy category of symmetric, n-multilinear functors from finite based spaces to spectra, with pointwise equivalences and $\Sigma_{n}$-$\mathbf{Sp}$.   In particular,
$$L(X_{1},...X_{n}) \sim C\wedge X_{1} \wedge ... \wedge X_{n}$$ where $C=L(S^{0},...,S^{0})$ is the coefficient $\Sigma_{n}$-spectrum.
\end{thm}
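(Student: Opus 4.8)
The plan is to construct an explicit inverse to the evaluation functor and to exhibit its two composites with evaluation as natural pointwise weak equivalences; the displayed formula then falls out as a by-product. Write $\mathrm{ev}(L)=L(S^{0},\dots,S^{0})$, endowed with the $\Sigma_{n}$-action that the symmetry of $L$ induces by permuting the (identical) arguments, and define a candidate inverse $\Phi$ carrying a $\Sigma_{n}$-spectrum $C$ to the functor $(X_{1},\dots,X_{n})\mapsto C\wedge X_{1}\wedge\cdots\wedge X_{n}$, with the $\Sigma_{n}$-action on $C$ intertwined with the permutation of the smash factors. That $\Phi(C)$ is symmetric $n$-multilinear is immediate: smashing a fixed cofibrant spectrum with a based space preserves homotopy colimits and sends $*$ to $*$, so $\Phi(C)$ is reduced and $1$-excisive---hence linear---in each variable, while the intertwining supplies the symmetry. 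Since $S^{0}$ is the unit for the smash product, $\mathrm{ev}\circ\Phi\simeq\mathrm{id}$ equivariantly through $C\wedge S^{0}\wedge\cdots\wedge S^{0}\simeq C$, so the substance of the theorem is the opposite composite, namely the equivalence $L\simeq\Phi(\mathrm{ev}(L))$.

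The heart of the argument is the single-variable case: a linear simplicial functor $M:\mathbf{S}_{f}\to\mathbf{Sp}$ (so that $M(*)\simeq *$) satisfies $M(X)\simeq M(S^{0})\wedge X$. The simplicial enrichment of $M$ furnishes natural structure maps $\mathrm{Map}_{*}(S^{0},X)\wedge M(S^{0})\to M(X)$, and since $\mathrm{Map}_{*}(S^{0},X)\simeq X$ in based spaces these assemble into a natural transformation $\alpha_{X}:M(S^{0})\wedge X\to M(X)$. Both $X\mapsto M(S^{0})\wedge X$ and $X\mapsto M(X)$ are reduced and preserve all homotopy colimits---the former because smashing with a fixed cofibrant spectrum does, the latter because a reduced $1$-excisive functor with stable codomain does (cf.\ the remark following Theorem 16). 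The component $\alpha_{S^{0}}$ is the unit isomorphism, so the class of finite based $X$ on which $\alpha_{X}$ is an equivalence contains both $S^{0}$ and $*$ and, by the homotopy invariance of homotopy colimits (Corollary 11(3) applied to the Bousfield--Kan model of Theorem 12), is closed under finite homotopy colimits. Because every finite based complex is assembled from $S^{0}$ by iterated suspensions and cell attachments, each of which is a finite homotopy colimit, $\alpha_{X}$ is a pointwise weak equivalence for every $X$.

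For the multilinear statement one applies the single-variable result in each slot in succession, holding the remaining arguments fixed, to obtain the chain $L(X_{1},\dots,X_{n})\simeq L(S^{0},X_{2},\dots,X_{n})\wedge X_{1}\simeq\cdots\simeq L(S^{0},\dots,S^{0})\wedge X_{1}\wedge\cdots\wedge X_{n}=C\wedge X_{1}\wedge\cdots\wedge X_{n}$. Throughout one evaluates on cofibrant replacements $Q_{c}X_{i}$, so that each smash computes the derived smash and each value of $L$ is homotopically meaningful; Corollary 11 guarantees that the coends and smash products in play preserve the relevant pointwise equivalences, whence the composite of assembly maps is a zig-zag of natural weak equivalences that descends to an equivalence of homotopy categories inverse to $\mathrm{ev}$.

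I expect the main obstacle to be the coherent bookkeeping of the $\Sigma_{n}$-symmetry rather than any individual equivalence. The iterated assembly map privileges an ordering of the variables, whereas $C\wedge X_{1}\wedge\cdots\wedge X_{n}$ carries a genuine $\Sigma_{n}$-action, so one must check that the identification $L\simeq\Phi(\mathrm{ev}(L))$ is equivariant---equivalently, that the structure maps coming from the enrichment are compatible both with the symmetry of $L$ and with the symmetric-monoidal braiding of the smash product. Confirming this compatibility, together with the observation that the finitary hypothesis is precisely what licenses the reduction to finite complexes generated by $S^{0}$, is where the enriched functoriality has to be handled with care.
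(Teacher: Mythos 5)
The paper offers no proof of this theorem: it is stated as a quotation of Goodwillie \cite{CalcIII}, so the only benchmark is the cited source, and your reconstruction is essentially that classical argument---an assembly map extracted from the enrichment, a cellular induction from $S^{0}$ using stability and reducedness, and iteration over the variables. The mathematics is sound, but two soft spots deserve tightening. First, the enrichment directly supplies only the unreduced structure map $M(S^{0})\wedge \mathrm{Map}_{\mathbf{S}_*}(S^{0},X)_{+}\to M(X)$; to obtain your $\alpha_{X}:M(S^{0})\wedge X\to M(X)$ you must kill the basepoint component, which lands in $M(*)$, and $M(*)$ is only \emph{weakly} contractible rather than a point, so the based assembly map exists only after a cofiber construction or zig-zag---this is exactly where reducedness enters and should be made explicit. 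Second, the equivariance worry you flag at the end is genuine if you insist on iterating the one-variable equivalence slot by slot, since that privileges an ordering of the variables; the standard repair (and what Goodwillie in effect does) is to construct the multivariable assembly map $C\wedge X_{1}\wedge\cdots\wedge X_{n}\to L(X_{1},\dots,X_{n})$ in a single step from the enrichment in all variables simultaneously, after which $\Sigma_{n}$-equivariance is a direct naturality check against the symmetry isomorphisms of $L$, and your one-variable argument shows this equivariant map is an underlying weak equivalence---which suffices, because the paper defines weak equivalences of $\Sigma_{n}$-$\mathbf{Sp}$ to be created in $\mathbf{Sp}$. One small citation slip: the colimit-preservation you invoke is not the remark following Theorem 16 (that concerns the operators $P_{n}$ and $D_{n}$ acting on functors) but the standard fact that a reduced $1$-excisive functor with stable codomain carries homotopy pushouts to homotopy pushouts; since the statement concerns only finite based complexes, closure under homotopy pushouts and suspension from $S^{0}$ is all your induction needs, and the finitary hypothesis is not actually required here.
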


The layers in a tower of a finitary functor from based simplicial sets to spectra are given by $D_{n} F(X) \sim D^{n} F(X,...,X)_{h \Sigma_{n}}\sim (\partial^{n}F(*) \wedge (X^{\wedge^{n}}))_{h\Sigma_{n}}$,  where the notation is suggestive of its link to calculus in the smooth setting.  The dictionary with the smooth setting is a useful tool in that it allows one to apply geometric intuition to problems in homotopy theory, e.g., the partial chain rule established in \cite{Chain}.  The spectrum $P_{n}F(X)$ is thought of as the $n^{th}$-Taylor approximation about the basepoint evaluated at X.  As functors themselves, $P_{n}$ and $D_{n}$ are thought of as differential operators --where linearity is expressed categorically as homotopy cocontinuity and the degree as the order of excision.  The geometric analogy is the intuition for the classification given in Lemma B.

\section{A Derived {\bf Sp}-Enriched Yoneda Embedding}
 
\subsection{Homotopy Functors}

A simplicial functor preserves simplicial homotopies and hence weak equivalences between cofibrant-fibrant objects.  In other words, simply the property of being a simplicial functor between simplicial model categories implies a certain compatibility with the underlying homotopy theory.  Biedermann, Chorny, and R{\"o}ndigs \cite{Biedermann} use this observation to produce a model category whose homotopy theory models that of homotopy functors.  Their construction is a localization (or sheafification) of simplicial presheaves under the projective structure.   Here we use this idea to construct a model category for the homotopy theory of functionals. We circumvent any set theoretic difficulties to ensure our model structures are cofibrantly generated by assuming our categories are small.

Let \textbf{N} be a cofibrantly generated simplicial model category and \textbf{M} a \textit{small} simplicial category.  Assume that \textbf{M} comes with its own notion of weak equivalences compatible with the enrichment, e.g., weak equivalences are simplicial on some deformation retract whose objects are  `good'.  If $\mathbf{M}^{\circ}$ denotes the full subcategory of good objects with natural homotopical structure, then \textit{all} objects in $[\mathbf{M}^{\circ},\mathbf{N}]$ preserve weak equivalences.  
Let $h[\mathbf{M},\mathbf{N}]$ denote the full subcategory of $[\mathbf{M},\mathbf{N}]$ such that the underlying functor of each object preserves weak equivalences. Give the category $[\mathbf{M}^{\circ},\mathbf{N}]$ the projective model structure and define a homotopical structure on $h[\mathbf{M},\mathbf{N}]$ using pointwise equivalences.
Since $\mathbf{M}$ is a small simplicial model category, the good objects are simply the fibrant-cofibrant objects and, by Corollary 9,  there exists a simplicial fibrant-cofibrant replacement functor $Q:\mathbf{M} \rightarrow \mathbf{M^{\circ}}$.  Let $P^{*}:h[\mathbf{M},\mathbf{N}] \rightarrow [\mathbf{M}^{\circ},\mathbf{N}]$ denote the functor induced by restriction and note that, although not a Quillen pair, the functors $P^*$ and $Q^*$ define an equivalence of homotopy categories.

\begin{lem}
 The functors 
$Q^{*}:h[\mathbf{M},\mathbf{N}] \rightarrow [\mathbf{M}^{\circ},\mathbf{N}]$ and $P^{*}:h[\mathbf{M},\mathbf{N}] \rightarrow [\mathbf{M}^{\circ},\mathbf{N}]$ 
induce an equivalence of homotopy categories.  
\end{lem}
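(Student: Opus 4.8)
The plan is to argue directly at the level of homotopical categories rather than through Quillen machinery, since, as noted, the pair is not Quillen. Here $P^{*}$ is restriction along the inclusion $\iota\colon\mathbf{M}^{\circ}\hookrightarrow\mathbf{M}$, while $Q^{*}=(-)\circ Q$ is precomposition with the replacement functor; these run in opposite directions between the homotopical categories $h[\mathbf{M},\mathbf{N}]$ and $[\mathbf{M}^{\circ},\mathbf{N}]$, each taken with pointwise weak equivalences. By the universal property of localization, any functor carrying weak equivalences to weak equivalences descends to homotopy categories, so the first task is to check that $P^{*}$ and $Q^{*}$ are homotopical. For $P^{*}$ this is immediate, since the restriction of a pointwise weak equivalence is again one. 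For $Q^{*}$ I would first verify that it lands in $h[\mathbf{M},\mathbf{N}]$ at all: given $G\in[\mathbf{M}^{\circ},\mathbf{N}]$, the composite $G\circ Q$ preserves weak equivalences because $Q$ carries every weak equivalence of $\mathbf{M}$ to a weak equivalence between good objects (via the replacement comparison together with two-out-of-six), while every object of $[\mathbf{M}^{\circ},\mathbf{N}]$ preserves weak equivalences between good objects, being a simplicial functor on fibrant-cofibrant objects. That $Q^{*}$ then preserves pointwise weak equivalences is clear objectwise.

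The heart of the argument is to produce natural weak equivalences $P^{*}Q^{*}\Rightarrow\mathrm{id}$ and $Q^{*}P^{*}\Rightarrow\mathrm{id}$, both induced by the natural comparison $q$ between $Q$ and $\mathrm{id}_{\mathbf{M}}$ coming from Corollary 9 (in general a natural zigzag of pointwise weak equivalences). For the first composite, $P^{*}Q^{*}G=G\circ(Q|_{\mathbf{M}^{\circ}})$; restricting $q$ to good objects and applying $G$ produces a natural weak equivalence relating $P^{*}Q^{*}G$ and $G$, since $G$ preserves weak equivalences between good objects. For the second, $Q^{*}P^{*}F=F\circ(\iota Q)$, and because $F$ preserves \emph{all} weak equivalences by virtue of lying in $h[\mathbf{M},\mathbf{N}]$, applying $F$ to $q$ produces a natural weak equivalence relating $Q^{*}P^{*}F$ and $F$.

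It then follows formally that both composites become naturally isomorphic to the identity after localizing at pointwise weak equivalences, so the induced functors on homotopy categories are mutually inverse equivalences. I expect the main obstacle to be the well-definedness and homotopy invariance of $Q^{*}$ addressed in the first step: with no Quillen adjunction to lean on, the genuine content is that precomposition with $Q$ preserves the property of being weak-equivalence preserving, which rests on the earlier observation that a simplicial functor preserves weak equivalences between fibrant-cofibrant objects, together with the fact that the comparison $q$ furnished by Corollary 9 is a natural weak equivalence on all of $\mathbf{M}$ (where the hypothesis that the good objects form a simplicial deformation retract enters). The remainder is a formal consequence of two-out-of-six and the universal property of localization.
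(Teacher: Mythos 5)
Your proof is correct and is essentially the paper's own (one-line) argument, expanded: the paper simply asserts that restriction $P^{*}$ and precomposition $Q^{*}$ compose to functors naturally weakly equivalent to the identities via the comparison between $Q$ and $\mathrm{id}_{\mathbf{M}}$, which is exactly what you verify, including the genuinely substantive point that $Q^{*}$ lands in $h[\mathbf{M},\mathbf{N}]$. One small refinement: for $P^{*}Q^{*}G\Rightarrow G$ the zigzag $A\leftarrow Q_{c}A\rightarrow QA$ passes through the non-good object $Q_{c}A$, so instead of citing that $G$ preserves weak equivalences between good objects you should note that both legs are simplicial homotopy equivalences (a trivial fibration between cofibrant objects, and a trivial cofibration into a fibrant object), which any simplicial functor $G$ preserves.
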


\begin{proof}
It is trivial to check that restriction to fibrant-cofibrant objects and  precomposition with fibrant-cofibrant replacement induces an equivalence with the identity.
\end{proof}

The category $[\mathbf{M}^{\circ},\mathbf{N}]$ is a natural model for the homotopy theory of homotopy functors with a simple, explicit and cofibrantly generated model structure.  Note the smallness of the indexing category is essential; indeed, when $\mathbf{M}$ is large there exists an analogous construction but at the cost of cofibrant generation and functorial replacement \cite{largedia}. 

\subsection{Homotopy Operators}
The goal of this section is to demonstrate a classification of homotopy operators, as in Corollary 3 with $\mathbf{V}=\mathbf{Sp}$.  Along the way we will show the  e
simplicial categorical notions which captures the `up-to-homotopy' information of functors sharing the formal properties of $P_{n}$ and $D_{n}$. Let $\mathbf{M}$ be a small simplicial model category.

\begin{defn}
A \textit{homotopy operator} is a object in $[h[\mathbf{M},\mathbf{Sp}], h[\mathbf{M},\mathbf{Sp}]]$ whose underlying simplicial functor preserves weak equivalences and \textit{all} homotopy colimits.  We denote the full subcategory of homotopy operators as $hOp(\mathbf{M})$.
\end{defn}

\noindent Note that the category of homotopy operators, as a subcategory of functors between homotopical categories, has a natural  homotopical structure defined pointwise.

Assume that we have a simplicial fibrant-cofibrant replacement functor $Q:\mathbf{M}\rightarrow \mathbf{M}^{\circ}$ as in Lemma 21, e.g., $\mathbf{M}$ is cofibrantly generated.
Given a homotopy operator $\mathbb{L}^h \in hOp(\mathbf{M})$ we define its homotopy symbol, $hSym_{\mathbb{L}^h}$, as the composition 

$$\mathbf{M}^{op} \xrightarrow{Q^{*}\Sigma^{\infty}Map_{M^{\circ}}(-,?)} h[\mathbf{M},\mathbf{Sp}] \xrightarrow{\mathbb{L}^h} h[\mathbf{M},\mathbf{Sp}]$$
with the derived mapping space.
This induces a well defined  functor \begin{center}
$hSym: hOp(\mathbf{M}) \rightarrow h[\mathbf{M}^{op}\otimes \mathbf{M}, \mathbf{Sp}]$, \end{center}
given pointwise by the formula 
\begin{center}
$hSym_{\mathbb{L}^{h}}(A,B)=\mathbb{L}^{h}(Q^{*}\Sigma^{\infty}Map_{\mathbf{M}^{\circ}}(A,-))(B)=\mathbb{L}^{h}(\Sigma^{\infty}Map_{M^{\circ}}(QA,Q-)(B)$. \end{center}

The symbol functor is our candidate inverse to a `Yoneda embedding' using the derived mapping spectrum.  In order to show this is an equivalence we proceed indirectly and show that both categories, defined by `up-to-homotopy' properties, can be made strict using simplicial enrichment.  The case of homotopy functors has already been discussed in the previous section giving an equivalence of homotopy categories 
\begin{center}
$h[\mathbf{M}^{op} \otimes \mathbf{M},\mathbf{Sp}] \sim [\mathbf{({M}^{\circ})^{op}}\otimes \mathbf{M}^{\circ} , \mathbf{Sp}] $.\end{center}

Observe that the adjunction $\Omega^{\infty} \dashv \Sigma^{\infty}: \mathbf{S_{(*)}} \rightarrow \mathbf{Sp}$ is (lax) monoidal; indeed, the simplicial structure on symmetric spectra is actually induced by the 2-functor $\Omega^{\infty}:\mathbf{Sp}$-$\mathbf{Cat} \rightarrow \mathbf{S}$-$\mathbf{Cat}$, where $Map_{\mathbf{Sp}}(A,B)=\Omega^{\infty}\underline{hom}_{\mathbf{Sp}}(A,B)$.   We conclude that the \textit{category} of simplicial functors $[\mathbf{({M}^{\circ})^{op}}\otimes \mathbf{M}^{\circ} , \mathbf{Sp}]$ is equivalent to the \textit{category} of $\mathbf{Sp}$-enriched functors $[\Sigma^{\infty}\mathbf{({M}^{\circ})^{op}}\otimes\Sigma^{\infty}\mathbf{M}^{\circ} , \mathbf{Sp}]$. The enrichment over spectra is reflecting the stability of the homotopy theory.
By Corollary 3, with $\mathbf{V}=\mathbf{Sp}$, the category of bifunctors classifies strict $\mathbf{Sp}$-operators

$$[(\mathbf{M}^{\circ})^{op}\otimes \mathbf{M}^{\circ} , \mathbf{Sp}] \cong [\Sigma^{\infty}(\mathbf{M}^{\circ})^{op}\otimes \Sigma^{\infty}\mathbf{M}^{\circ}, \mathbf{Sp}] \cong  Op^{\mathbf{Sp}}(\mathbf{M^{\circ}}).$$

We denote the composite functor assigning a  $\mathbf{Sp}$-operator to a homotopy operator as
 $$S:hOp(\mathbf{M}) \rightarrow Op^{\mathbf{Sp}}(\mathbf{M}^{\circ}).$$  It will be convenient to be rather explicit and give the functor S pointwise by the formula 

\begin{center}
$(S\mathbb{L}^{h})F(X)=
\int^{K\in\mathbf{M}^{\circ}} P^{*}hSym_{\mathbb{L}^{h}}(K,-) \otimes F(K)$\\$=
\int^{K\in\mathbf{M}^{\circ}} (\mathbb{L}^{h}(\Sigma^{\infty}Map_{\mathbf{M}^{\circ}}(QK,Q-)))(X) \wedge F(K).$ 
\end{center}

\noindent A model structure on strict operators will be induced by the equivalence 
$$Op^{\mathbf{Sp}}(\mathbf{M}^{\circ})\cong [(\mathbf{M}^{\circ})^{op} \otimes \mathbf{M}^{\circ}, \mathbf{Sp}],$$
 where $[(\mathbf{M}^{\circ})^{op} \otimes \mathbf{M}^{\circ}, \mathbf{Sp}]$ has the projective structure.

We construct a homotopical inverse by associating a homotopy operator to a strict operator.
An operator $\mathbb{L} \in Op^{\mathbf{Sp}}(\mathbf{M^{\circ}})$ is given by the $\mathbf{Sp}$-enriched coend $$\mathbb{L}F(X)=\int^{K\in\mathbf{M^{\circ}}} Sym_{\mathbb{L}}(K,X) \otimes F(K),$$ where $F \in [\Sigma^{\infty}(\mathbf{M}^{\circ}),\mathbf{Sp}]$ and $Sym_{\mathbb{L}} \in [\Sigma^{\infty}(\mathbf{M}^{\circ})^{op}\otimes \Sigma^{\infty}\mathbf{M}^{\circ},\mathbf{Sp}]$.  To make this homotopy invariant and act on homotopy functors we simply take the left derived version acting on $P^{*}F$.  In more detail, consider the \textit{simplicial} functor
 defined by 
 \begin{center}
 $\int^{K\in\mathbf{M^{\circ}}} Q_{c}Sym_{\mathbb{L}}(K,Q-) \wedge Q_{c}\tilde{F}(K):=\int^{K\in\mathbf{M^{\circ}}} Sym_{\mathbb{L}}(K,-) \tilde{\otimes} F(K)$, 
\end{center} 
where $Q_{c}$ denotes the  cofibrant replacement functor in the appropriate simplicial \textit{diagram} category.
This is clearly a homotopy functor as it is the image of a simplicial functor under $Q^{*}$.  As a functor of F it preserves weak equivalences  and all homotopy colimits -- this follows from expressing the homotopy colimit using the Bousfield-Kan model and Fubini's theorem for iterated coends. 
We also have a well defined functor $$D:Op^{\mathbf{Sp}}(\mathbf{M^{\circ}}) \rightarrow hOp(\mathbf{M})$$ which we call the \textit{derived} operator$$D(\mathbb{L})(F)(X) = \int^{\mathbf{M}^{\circ}} Sym_{\mathbb{L}}(K,X)\tilde{\otimes} F(X).$$

In summary, we have the following  diagram 
\newarrow{Congruent}33333
\begin{diagram}
[(\mathbf{M}^{\circ})^{op}\otimes \mathbf{M}^{\circ} , \mathbf{Sp}]    &	\cong		  &			Op^{\mathbf{Sp}}(\mathbf{M}^{\circ})	\\
\uTo^{P^{*}}\dTo_{Q^{*}}				&			 &	\uTo^{S}\dTo_{D}					\\
h[\mathbf{M}^{op} \otimes \mathbf{M}, \mathbf{Sp}]			&\lTo_{hSym} &hOp(\mathbf{M})							\\
\end{diagram}

\noindent and we will prove the homotopy symbol functor induces a homotopy equivalence by showing D and S induce equivalences of homotopy categories. 

\begin{lem}
If $\mathbb{L} \in Op^{\mathbf{Sp}}(\mathbf{M}^{\circ})$, then there is a natural equivalence $\mathbb{L} \sim SD(\mathbb{L})$.
\end{lem}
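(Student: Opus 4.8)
The plan is to reduce the claim to a computation of symbols and to recognise that computation as a derived co-Yoneda (density) contraction. Because the homotopical structure on $Op^{\mathbf{Sp}}(\mathbf{M}^{\circ})$ is transported along the classification equivalence of Corollary 3 from the projective structure on $[(\mathbf{M}^{\circ})^{op}\otimes\mathbf{M}^{\circ},\mathbf{Sp}]$, producing a natural equivalence $\mathbb{L}\sim SD(\mathbb{L})$ is the same as producing a natural, pointwise weak equivalence of symbols $Sym_{\mathbb{L}}\sim Sym_{SD(\mathbb{L})}$. By construction $S$ is $hSym$ followed by the restriction $P^{*}$ and the classification equivalence, so $Sym_{SD(\mathbb{L})}=P^{*}hSym_{D(\mathbb{L})}$, and it suffices to produce a natural weak equivalence $hSym_{D(\mathbb{L})}(A,B)\sim Sym_{\mathbb{L}}(A,B)$ for $A,B\in\mathbf{M}^{\circ}$.

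First I would unwind the left-hand side using the definitions of the homotopy symbol and of the derived operator $D$:
\[ hSym_{D(\mathbb{L})}(A,B)=D(\mathbb{L})\big(\Sigma^{\infty}Map_{\mathbf{M}^{\circ}}(QA,Q-)\big)(B)=\int^{K\in\mathbf{M}^{\circ}}Sym_{\mathbb{L}}(K,B)\,\tilde{\otimes}\,\Sigma^{\infty}Map_{\mathbf{M}^{\circ}}(QA,QK). \]
Here the factor $K\mapsto Sym_{\mathbb{L}}(K,B)$ is a $\mathbf{Sp}$-presheaf on $\mathbf{M}^{\circ}$ and $K\mapsto\Sigma^{\infty}Map_{\mathbf{M}^{\circ}}(QA,QK)$ is covariant. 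Since $A,K\in\mathbf{M}^{\circ}$ and $Q$ is naturally weakly equivalent to the identity there, Proposition 7(4) together with the fact that $\Sigma^{\infty}$ preserves weak equivalences between cofibrant objects identifies this second factor, up to a natural pointwise weak equivalence of objectwise cofibrant diagrams, with the genuine corepresentable $\Sigma^{\infty}Map_{\mathbf{M}^{\circ}}(A,-)$. The coend is thus a derived contraction of a presheaf against a representable.

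The heart of the proof is then a derived co-Yoneda lemma: for a $\mathbf{Sp}$-presheaf $G$ on $\mathbf{M}^{\circ}$,
\[ \int^{K\in\mathbf{M}^{\circ}}G(K)\,\tilde{\otimes}\,\Sigma^{\infty}Map_{\mathbf{M}^{\circ}}(A,K)\;\sim\;G(A). \]
I would establish this in three moves. The enriched density theorem (Kelly) gives the strict isomorphism $\int^{K}G(K)\otimes\Sigma^{\infty}Map_{\mathbf{M}^{\circ}}(A,K)\cong G(A)$ for the underlying $\mathbf{Sp}$-functors. Next, the corepresentable $\Sigma^{\infty}Map_{\mathbf{M}^{\circ}}(A,-)$ is a free diagram, hence projectively cofibrant, and is objectwise cofibrant since suspension spectra are cofibrant; so by Corollary 11 the derived coend is computed by replacing only $G$ with its cofibrant model $Q_{c}G$, and it is homotopy invariant in $G$. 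Applying density to $Q_{c}G$ and using $Q_{c}G\sim G$ gives the displayed equivalence. Finally, transporting the weak equivalence of the covariant factor across the derived coend, again via the homotopy invariance of Corollary 11 (both factors being objectwise cofibrant), yields $hSym_{D(\mathbb{L})}(A,B)\sim Sym_{\mathbb{L}}(A,B)$.

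The main obstacle is the homotopical bookkeeping in this derived density step. One must check that the derived coend genuinely agrees with the strict density formula, which requires controlling cofibrancy of both $Q_{c}Sym_{\mathbb{L}}(-,B)$ and the corepresentable and invoking Corollary 11 in the correct variable; and one must absorb the replacements $QA$, $QK$, and the evaluation replacement $QB$ using the natural weak equivalence $Q\sim\mathrm{id}$ on $\mathbf{M}^{\circ}$ (so that, e.g., $Sym_{\mathbb{L}}(A,QB)\sim Sym_{\mathbb{L}}(A,B)$ after cofibrant replacement). Once this is in place, naturality of the equivalence in $\mathbb{L}$ is inherited from the functoriality of $hSym$, $D$, $P^{*}$, and the simplicial cofibrant replacement functor of Corollary 9, together with the naturality of the density isomorphism.
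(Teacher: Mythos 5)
Your proposal is correct and follows essentially the same route as the paper: reduce to a pointwise comparison of symbols, unwind $Sym_{SD\mathbb{L}}(A,B)$ as the derived coend $\int^{K\in\mathbf{M}^{\circ}}Q_{c}Sym_{\mathbb{L}}(K,B)\wedge Q_{c}\Sigma^{\infty}Map_{\mathbf{M}^{\circ}}(QA,QK)$, strip the replacements via the cofibrancy bookkeeping of Corollary 11, and conclude by the enriched co-Yoneda (density) evaluation together with $Q\sim\mathrm{id}$ on $\mathbf{M}^{\circ}$. The only cosmetic difference is ordering: the paper applies strict density at $QA$ first (using cofibrancy of $Q_{c}Sym_{\mathbb{L}}(-,B)$ and objectwise cofibrancy of the corepresentable) and absorbs the $Q$'s at the end via homotopy invariance of simplicial functors on good objects, whereas you first replace the corepresentable factor by $\Sigma^{\infty}Map_{\mathbf{M}^{\circ}}(A,-)$ and then invoke derived density with the projective cofibrancy of the free diagram --- both steps rest on the same two-variable homotopy invariance, so the arguments coincide in substance.
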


\begin{proof}
Note that strict operators are equivalent to their symbols.  In the projective model structure, weak equivalences between simplicial bifunctors are defined pointwise. 
The symbol of $SD\mathbb{L}$ is given by 
\begin{align*}
Sym_{SD\mathbb{L}}(A,B)&=   P^{*}hSym_{D \mathbb{L}} (A,B)  \\
&= P^{*}D \mathbb{L}(\Sigma^{\infty}Map_{\mathbf{M}^{\circ}}(QA,Q-))(B)\\
&= \int^{K\in\mathbf{M}^{\circ}}Q_{c}Sym_{\mathbb{L}}(K,B)\wedge Q_{c}\Sigma^{\infty}Map_{\mathbf{M}^{\circ}}(QA,QK).
\end{align*}
Since $Q_{c}Sym_{\mathbb{L}}(-,B)$ is a projectively cofibrant $(\mathbf{M}^{\circ})^{op}$ diagram and $Map_{\mathbf{M}^{\circ}}(QA,-)$ is objectwise cofibrant we have the  equivalences as desired
\begin{align*}
&\int^{K\in\mathbf{M}^{\circ}}Q_{c}Sym_{\mathbb{L}} (K,B) \wedge Q_{c}\Sigma^{\infty}Map_{\mathbf{M}^{\circ}}(QA,QK) \\
\sim & \int^{K\in\mathbf{M}^{\circ}}Q_{c}Sym_{\mathbb{L}} (K,B) \wedge \Sigma^{\infty}Map_{\mathbf{M}^{\circ}}(QA,K) \\
\sim & Q_{c}Sym_{\mathbb{L}}(QA,B)\\ \sim & Sym_{\mathbb{L}}(A,B).
\end{align*}

\end{proof}

\noindent We now show the converse for a homotopy operator $\mathbb{L}^{h} \in hOp(M)$
\begin{lem}
There is a natural equivalence  $\mathbb{L}^{h} \sim DS\mathbb{L}^{h}$
\end{lem}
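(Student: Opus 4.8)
The plan is to exploit the two defining properties of a homotopy operator---that $\mathbb{L}^{h}$ preserves weak equivalences and \emph{all} homotopy colimits---together with the derived co-Yoneda (density) formula, which expresses every homotopy functor as a derived coend of representables. Since a homotopy operator is a functor on the homotopical category $h[\mathbf{M},\mathbf{Sp}]$, it is determined up to homotopy by its action, so it suffices to produce an equivalence $DS\mathbb{L}^{h}(F)\sim\mathbb{L}^{h}(F)$ natural in the homotopy functor $F$. Unwinding the definitions of $S$ and $D$ and discarding the restriction $P^{*}$ up to equivalence, for an object $X$ one has
$$DS\mathbb{L}^{h}(F)(X) \sim \int^{K\in\mathbf{M}^{\circ}} hSym_{\mathbb{L}^{h}}(K,X)\,\tilde{\otimes}\,F(K) = \int^{K\in\mathbf{M}^{\circ}} \mathbb{L}^{h}(\Sigma^{\infty}Map_{\mathbf{M}^{\circ}}(QK,Q-))(X)\,\tilde{\otimes}\,F(K),$$
so the task reduces to identifying this derived coend with $\mathbb{L}^{h}(F)(X)$.

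First I would record the derived co-Yoneda lemma. On $\mathbf{M}^{\circ}$ the representables $\Sigma^{\infty}Map_{\mathbf{M}^{\circ}}(K,-)$ are objectwise cofibrant homotopy functors by Proposition 7, so the strict enriched density isomorphism $F\cong\int^{K}\Sigma^{\infty}Map_{\mathbf{M}^{\circ}}(K,-)\otimes F(K)$ passes to a natural weak equivalence of derived coends
$$F(-) \sim \int^{K\in\mathbf{M}^{\circ}}\Sigma^{\infty}Map_{\mathbf{M}^{\circ}}(K,-)\,\tilde{\otimes}\,F(K),$$
the homotopy-invariance of the right-hand side being guaranteed by Corollary 11. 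Applying $\mathbb{L}^{h}$, which preserves weak equivalences, gives $\mathbb{L}^{h}(F)\sim\mathbb{L}^{h}\big(\int^{K}\Sigma^{\infty}Map_{\mathbf{M}^{\circ}}(K,-)\,\tilde{\otimes}\,F(K)\big)$.

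The crux is then to commute $\mathbb{L}^{h}$ past the derived coend. I would present the coend as a homotopy colimit via the Bousfield--Kan model of Theorem 13 and Fubini for iterated coends---exactly the reduction already invoked to define $D$---so that it is assembled from honest homotopy colimits of the representables smashed with the coefficient spectra $F(K)$. Because $\mathbb{L}^{h}$ is a simplicial ($\mathbf{Sp}$-enriched) functor preserving all homotopy colimits, it commutes both with the homotopy colimit and with smashing by the fixed spectra $F(K)$, yielding
$$\mathbb{L}^{h}\Big(\int^{K}\Sigma^{\infty}Map_{\mathbf{M}^{\circ}}(K,-)\,\tilde{\otimes}\,F(K)\Big)(X) \sim \int^{K}\mathbb{L}^{h}\big(\Sigma^{\infty}Map_{\mathbf{M}^{\circ}}(K,-)\big)(X)\,\tilde{\otimes}\,F(K),$$
whose right-hand side is precisely $DS\mathbb{L}^{h}(F)(X)$ by the computation above. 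Tracking the naturality of the density equivalence in $F$ then upgrades this to an equivalence of homotopy operators.

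The main obstacle is this interchange step. The definition of a homotopy operator asserts only preservation of homotopy colimits, so one must genuinely reduce the weighted/enriched derived coend to ordinary homotopy colimits through the bar construction and then invoke $\mathbf{Sp}$-cocontinuity to absorb the coefficients $F(K)$; the cofibrancy bookkeeping required to keep every coend homotopically meaningful (Proposition 10 and Corollary 11) is where the care lies, and it is also what licenses replacing $QK$ and $Q-$ by their underived counterparts up to equivalence.
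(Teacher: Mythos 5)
Your proof is correct, and it rests on the same two pillars as the paper's argument---the derived co-Yoneda presentation of an arbitrary homotopy functor as a coend of derived representables, and the homotopy-colimit preservation built into the definition of a homotopy operator---but you factor the argument differently. The paper first computes the homotopy symbol of the composite, using the co-Yoneda reduction $\int^{K}Q_{c}Sym_{S\mathbb{L}^{h}}(K,B)\wedge\Sigma^{\infty}Map_{\mathbf{M}^{\circ}}(QA,K)\cong Q_{c}Sym_{S\mathbb{L}^{h}}(QA,B)$ in the \emph{representable} variable, concluding $hSym_{DS\mathbb{L}^{h}}\sim hSym_{\mathbb{L}^{h}}$, and only then invokes density: since every homotopy functor is a homotopy colimit of derived representables and both $\mathbb{L}^{h}$ and $DS\mathbb{L}^{h}$ are homotopy operators, agreement on derived representables suffices. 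You instead expand $F$ by density at the outset and commute $\mathbb{L}^{h}$ past the derived coend in one stroke; the interchange you must justify is exactly what the paper needs, tacitly, when it asserts that agreement on representables propagates along the (spectrally weighted) homotopy colimit presenting $F$, so your organization has the virtue of making that hidden cost explicit, at the price of a longer cofibrancy audit in a single computation. One caveat: your parenthetical claim that $\mathbb{L}^{h}$ is ``$\mathbf{Sp}$-enriched'' is not part of the definition of a homotopy operator, which supplies only a simplicial functor preserving weak equivalences and all homotopy colimits; to absorb the fixed coefficients $F(K)$ you should instead write the spectrum $F(K)$ as a homotopy colimit of (de)suspended suspension spectra, use that a simplicial homotopy-colimit-preserving functor preserves homotopy tensors with simplicial sets, and use stability of $\mathbf{Sp}$ to handle the desuspensions. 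Since you flag this interchange as the crux and sketch the bar-construction reduction, this is a repairable imprecision rather than a gap---and it is one that the paper's own proof glosses at the corresponding point.
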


\begin{proof}
The homotopy symbol of the composition is given by 
\begin{align*}
hSym_{DS \mathbb{L}^h}(A,B)=DS \mathbb{L}^h (\Sigma^{\infty}Map_{M^{\circ}}(QA,Q-))(B)&= \\
\int^{K\in\mathbf{M}^{\circ}}Sym_{S \mathbb{L}^h}(K,B) \tilde{\otimes} \Sigma^{\infty}Map_{M^{\circ}} (QA, QK) &\sim \\
\int^{K\in\mathbf{M}^{\circ}} Q_{c}Sym_{S\mathbb{L}^h}(K,B) \wedge \Sigma^{\infty}Map_{M^{\circ}} (QA, K) &\cong \\
Q_{c}Sym_{S \mathbb{L}^h}(QA,B)=Q_{c}hSym_{\mathbb{L}^h}(QA,B) & \sim  
hSym(A,B). & 
\end{align*}
We have shown an equivalence between the restriction of $DS \mathbb{L}$ and $\mathbb{L}$ on derived representables.   

Let $F \in h[\mathbf{M},\mathbf{Sp}]$ be an arbitrary homtopy functor, then $F \sim Q^{*}P^{*}F \sim Q^{*}F'$, where $F'$ is the cofibrant approximation to $P^{*}F \in [\mathbf{M}^{\circ},\mathbf{Sp}]$.  By the Yoneda lemma any functor $G \in [\mathbf{M}^{\circ}, \mathbf{Sp}]$ is a G-weighted colimit of representables.  Since $F'$ is cofibrant and representables are objectwise cofibrant, $F'$ is a homotopy colimit of representables.  It follows that any functor is equivalent to a homotopy colimit of derived representables proving the result.

\end{proof}
\begin{lem}
The previously defined functors induce equivalences of homotopy categories.
\begin{diagram}
Ho ([(\mathbf{M}^{\circ})^{op} \otimes \mathbf{M}^{\circ}, \mathbf{Sp}])   &\rTo^{\cong} & Ho( Op^{\mathbf{Sp}}(\mathbf{M}^{\circ}) )\\
\dTo_{\cong}				     &			   &\dTo_{\cong} \\
Ho( h[\mathbf{M}^{op} \otimes \mathbf{M}, \mathbf{Sp}])                    &\rTo^{\cong} &Ho( hOp(\mathbf{M})).
\end{diagram} \\
\end{lem}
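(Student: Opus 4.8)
The plan is to assemble the final Lemma (Lemma 25) purely from the structural pieces already in place. The statement asserts that the square of homotopy categories commutes up to natural equivalence and that all four arrows are equivalences. The two vertical arrows are handled entirely by prior results: the left vertical $P^*/Q^*$ pair is the equivalence of homotopy categories established in Lemma 21 (specialized to the diagram category $\mathbf{M}^{op}\otimes\mathbf{M}$, whose good objects form $(\mathbf{M}^\circ)^{op}\otimes\mathbf{M}^\circ$), while the top horizontal $\cong$ is the strict categorical equivalence $[(\mathbf{M}^\circ)^{op}\otimes\mathbf{M}^\circ,\mathbf{Sp}]\cong Op^{\mathbf{Sp}}(\mathbf{M}^\circ)$ coming from Corollary 3 together with the $\Omega^\infty\dashv\Sigma^\infty$ monoidal adjunction. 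So three of the four arrows are already known to be equivalences before the lemma is even stated; only the bottom horizontal arrow, the homotopy symbol functor $hSym\colon hOp(\mathbf{M})\to h[\mathbf{M}^{op}\otimes\mathbf{M},\mathbf{Sp}]$, requires genuine work.

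For that bottom arrow I would invoke the two preceding lemmas (Lemma 23 and Lemma 24), which are precisely the triangle identities for the would-be adjoint equivalence $S\dashv\!\vdash D$: Lemma 23 gives $\mathbb{L}\sim SD(\mathbb{L})$ for strict operators and Lemma 24 gives $\mathbb{L}^h\sim DS(\mathbb{L}^h)$ for homotopy operators. Together these say that $S$ and $D$ induce mutually inverse equivalences $Ho(Op^{\mathbf{Sp}}(\mathbf{M}^\circ))\simeq Ho(hOp(\mathbf{M}))$. To finish I would transport this equivalence across the two vertical equivalences and the top equivalence: concretely, $hSym$ is the composite obtained by going up the right edge along $S^{-1}=D$, across the top isomorphism, and down the left edge along $Q^*$, so $hSym$ is a composite of three equivalences of homotopy categories and is therefore itself an equivalence. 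The commutativity of the square up to natural isomorphism then amounts to checking that the two composites $Ho([(\mathbf{M}^\circ)^{op}\otimes\mathbf{M}^\circ,\mathbf{Sp}])\to Ho(hOp(\mathbf{M}))$ agree, which I would verify by comparing them on derived representables and appealing to the density argument already used in the proof of Lemma 24.

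The substantive content that I would flag as the crux is the coherence of the square, not the invertibility of any single arrow. Each arrow is individually an equivalence by a prior result, but to call the diagram a commutative square of equivalences one must produce a natural isomorphism between the clockwise and counterclockwise composites, and this is where the various derived coends, cofibrant replacements $Q_c$, and fibrant-cofibrant replacements $Q$ must be reconciled. The tool for this is the observation, implicit in the definition of $hSym$ and made explicit in the displayed formulas for $S$ and $D$, that $hSym$ factors as $P^*$ followed by the symbol of the strict operator $S\mathbb{L}^h$; that is, $P^*\circ hSym \cong Sym\circ S$ up to the equivalences of Proposition 10 and Corollary 11 governing when the coends involved are homotopy invariant. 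I would therefore isolate a single commuting-square identity at the level of symbols, $Sym_{S\mathbb{L}^h}\sim P^*hSym_{\mathbb{L}^h}$, deduce it from the definition of $S$ (which was built precisely to make the top and bottom symbols correspond), and then pass to homotopy categories.

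The main obstacle I anticipate is purely bookkeeping: ensuring that the cofibrancy hypotheses of Corollary 11 are met at each coend so that the strict equalities between symbols become genuine weak equivalences, and that these weak equivalences are natural in the operator. Since $\mathbf{M}^\circ$ consists of fibrant-cofibrant objects and the representables $\Sigma^\infty Map_{\mathbf{M}^\circ}(QA,-)$ are objectwise cofibrant, the relevant instances of Corollary 11(3)–(4) apply, but I would take care to check naturality in the operator variable rather than just pointwise in $\mathbf{M}^\circ$. Once that is in hand, the lemma follows formally: three arrows are equivalences by citation, the fourth is a composite of equivalences, and the square commutes by the symbol identity, so the diagram is a commutative square of equivalences of homotopy categories as claimed.
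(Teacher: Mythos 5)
Your proposal is correct and takes essentially the same route as the paper, which states this lemma without a separate proof precisely because it is the assembly you describe: Lemma 21 gives the left vertical equivalence, Corollary 3 together with the monoidal adjunction $\Omega^{\infty}\dashv\Sigma^{\infty}$ gives the top, Lemmas 23 and 24 show $S$ and $D$ are mutually inverse up to natural equivalence on the right, and the square commutes by the very construction of $S$ as the composite of $hSym$ with $P^{*}$ and the strict classification. Your extra attention to the cofibrancy hypotheses of Corollary 11 and to naturality in the operator variable is consistent with, merely more explicit than, the paper's treatment.
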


\begin{lem}
Homotopy operators are classified by their symbols, i.e., the homotopy symbol functor induces an equivalence of homotopy categories\\
\begin{center}
 $hOp(\mathbf{M}) \xrightarrow{\sim} h[\mathbf{M}^{op}\otimes \mathbf{M}, \mathbf{Sp}].$
\end{center}
\end{lem}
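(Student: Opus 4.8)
The plan is to assemble the final classification statement directly from the three lemmas that have just been established, rather than proving anything new. The statement asserts that $hSym : hOp(\mathbf{M}) \to h[\mathbf{M}^{op}\otimes\mathbf{M},\mathbf{Sp}]$ induces an equivalence of homotopy categories. The key observation is that the commutative square of Lemma 25 already contains every equivalence I need; the present lemma is essentially a two-out-of-three argument applied to that square.

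First I would recall the structure of the diagram. The left vertical pair $(P^{*}, Q^{*})$ induces an equivalence $Ho([(\mathbf{M}^{\circ})^{op}\otimes\mathbf{M}^{\circ},\mathbf{Sp}]) \simeq Ho(h[\mathbf{M}^{op}\otimes\mathbf{M},\mathbf{Sp}])$ by the homotopy-functor comparison of Lemma 21 (applied to the product indexing category $\mathbf{M}^{op}\otimes\mathbf{M}$). The top horizontal map is the equivalence $Ho([(\mathbf{M}^{\circ})^{op}\otimes\mathbf{M}^{\circ},\mathbf{Sp}]) \simeq Ho(Op^{\mathbf{Sp}}(\mathbf{M}^{\circ}))$ coming from Corollary 3 with $\mathbf{V}=\mathbf{Sp}$. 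The right vertical pair $(S,D)$ is the equivalence $Ho(Op^{\mathbf{Sp}}(\mathbf{M}^{\circ}))\simeq Ho(hOp(\mathbf{M}))$ established by Lemmas 22 and 23, whose inverse equivalences are witnessed by the natural equivalences $\mathbb{L}\sim SD\mathbb{L}$ and $\mathbb{L}^{h}\sim DS\mathbb{L}^{h}$. Thus three of the four edges of the square are known equivalences.

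Second, I would invoke the commutativity of the square from Lemma 25. By construction the bottom edge is $hSym$ precomposed with the appropriate identifications, and the square commutes up to natural isomorphism on homotopy categories. Since the top edge, the left edge, and the right edge are all equivalences, the remaining edge $hSym$ is forced to be an equivalence by the two-out-of-three property for equivalences of categories: writing the commutativity as $hSym \circ D \simeq Q^{*}$ (or equivalently tracing around the square), $hSym$ factors as a composite of known equivalences and their inverses. Concretely, $hSym \simeq Q^{*}\circ(\text{top})^{-1}\circ S$, exhibiting it as a composite of three equivalences.

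**The main obstacle** is not any calculation but ensuring that the square of Lemma 25 genuinely commutes as a diagram of functors on homotopy categories, compatibly with the specific natural equivalences produced in Lemmas 22 and 23, so that the two-out-of-three argument is legitimate rather than merely formal. In particular I should check that the composite $hSym\circ D$ agrees with $Q^{*}$ up to coherent natural equivalence, which amounts to unwinding that $hSym_{D\mathbb{L}}$ reproduces $P^{*}Sym_{\mathbb{L}}$ — precisely the content already verified inside the proof of Lemma 22. Granting that compatibility, the conclusion is immediate, and the proof reduces to the single sentence that $hSym$ is a composite of the equivalences appearing in the commutative square of Lemma 25.
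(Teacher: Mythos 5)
Your proposal is correct and follows essentially the same route as the paper, which offers no separate argument for this lemma but deduces it directly from the commutative square of equivalences: the left edge from the homotopy-functor comparison applied to $\mathbf{M}^{op}\otimes\mathbf{M}$, the top edge from Corollary 3 with $\mathbf{V}=\mathbf{Sp}$, and the right edge from the natural equivalences $\mathbb{L}\sim SD\mathbb{L}$ and $\mathbb{L}^{h}\sim DS\mathbb{L}^{h}$. The compatibility you flag as the main obstacle --- that $hSym\circ D$ agrees with $Q^{*}$ applied to the symbol, via $Sym_{S\mathbb{L}^{h}}=P^{*}hSym_{\mathbb{L}^{h}}$ --- is exactly the computation the paper carries out inside the proof of the $SD$ lemma, so nothing further is required.
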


We have no doubt a more direct proof of Lemma 26 is possible, but the use of simplicial categories seems to be of intrinsic interest.  For all $X \in \mathbf{M}$, $P_{n}$ is defined on the category of `local' functors $h[\mathbf{M}\downarrow X, \mathbf{Sp}]$ and is compatible, in some sense, with weak equivalences in $\mathbf{M}$ \cite{CalcIII}.  This suggests a generalization of homotopy operator which includes local information.  The use of the strict categorical equivalence of Corollary 3 hints at a classification of these `differential operators' where an up-to-homotopy approach seems to become exceedingly difficult.

\section{Applications to Calculus}
\subsection{The Maclaurin Tower}
The functors $P_{n}$ and $D_n$ are the motivating examples of homotopy operators, and Lemma 26 suggests that a given operator can be recovered from its restriction to representables.  
Unfortunately, we cannot apply Lemma 26 verbatim since simplicial sets is a large category.  This is not 
a serious problem since we will restrict to operators acting on finitary functors.  Finitary functors are extensions of functors defined on a small skeletal subcategory of finite simplicial sets $\mathbf{S}_{fin}$: the category of finitary homotopy functors $h[\mathbf{S},\mathbf{Sp}]$ is equivalent to $h[\mathbf{S}_{fin},\mathbf{Sp}]$.  Upon inspection  of Lemma 26  all that is required is a deformation of  simplicial sets to a subcategory of fibrant objects.  

The construction of such a functor follows as before: take the fibrant replacement  $Q$ of the identity functor in the projective model structure on the category $[\mathbf{S},\mathbf{S}]$.  Since $\mathbf{S}$ is large the treatment given here is insufficient to make sense of a projective structure on $[\mathbf{S}, \mathbf{S}]$. However, it is known a model structure on an appropriate subcategory, sufficient for  
our purpose, does exist and refer the reader to \cite{Chorny} and \cite{largedia} for the details.

Let $\mathbf{S}_{s}$ denote the full subcategory of $\mathbf{S}$ with object \textit{set}
 $\lbrace Q^{i}(X) | i \in \mathbb{N}, X \in \mathbf{S}_{fin} \rbrace$.
 Of course, $\mathbf{S}_{s}$ is not a simplicial model category but as a full subcategory of $\mathbf{S}$ it is homotopical and has a compatible simplicial enrichment.   The category $h[\mathbf{S}_{s},\mathbf{Sp}]$ models finitary homotopy functors.
The restricted functor $Q:\mathbf{S}_{s} \rightarrow \mathbf{S}_{s}$ serves as a deformation of $\mathbf{S}_{s}$ onto a category of good objects.  Let $\mathbf{S^{\circ}}$ denote the full subcategory of objects $\lbrace Q(X) | X \in \mathbf{S}_{s} \rbrace$.  The proof of the following follows Lemma 21 verbatim.

\begin{lem}
The functors $Q^{*}$ and $P^{*}$ induce an equivalence of homotopy categories
$[\mathbf{S}^{\circ}, \mathbf{Sp}] \sim h[\mathbf{S}_{fin}, \mathbf{Sp}]$.
\end{lem}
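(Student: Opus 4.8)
\emph{Proof proposal.} The plan is to read this off from Lemma 21, with the quadruple $(\mathbf{S}_s, \mathbf{S}^{\circ}, Q, \mathbf{Sp})$ playing the role of $(\mathbf{M}, \mathbf{M}^{\circ}, Q, \mathbf{N})$, and then to bridge from $h[\mathbf{S}_s, \mathbf{Sp}]$ to the target $h[\mathbf{S}_{fin}, \mathbf{Sp}]$ as stated. First I would verify the hypotheses of Lemma 21. The category $\mathbf{S}_s$ is \emph{small}, since its objects form the explicitly indexed set $\{Q^i(X) \mid i \in \mathbb{N},\, X \in \mathbf{S}_{fin}\}$, and it is homotopical and simplicially enriched as a full subcategory of $\mathbf{S}$; moreover $\mathbf{Sp}$ is a cofibrantly generated simplicial model category. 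Because every simplicial set is cofibrant, the good (fibrant--cofibrant) objects are exactly the Kan complexes, so $\mathbf{S}^{\circ}$ consists of good objects and every functor in $[\mathbf{S}^{\circ}, \mathbf{Sp}]$ automatically preserves weak equivalences. The point of defining $\mathbf{S}_s$ as it is --- and the one genuinely load-bearing observation --- is that fibrant replacement restricts to an endofunctor $Q \colon \mathbf{S}_s \to \mathbf{S}_s$ (indeed $Q(Q^i X) = Q^{i+1} X$ again lies in $\mathbf{S}_s$) whose image lands in $\mathbf{S}^{\circ}$; this furnishes precisely the simplicial deformation of $\mathbf{S}_s$ onto its subcategory of good objects that Lemma 21 demands.

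Granting this, Lemma 21 applies word for word and produces an equivalence of homotopy categories $[\mathbf{S}^{\circ}, \mathbf{Sp}] \sim h[\mathbf{S}_s, \mathbf{Sp}]$, implemented by $P^{*}$ (restriction along $\mathbf{S}^{\circ} \hookrightarrow \mathbf{S}_s$) and $Q^{*}$ (precomposition with $Q$); the homotopy $F \sim Q^{*}P^{*}F$ comes from the natural weak equivalence $\mathrm{id} \Rightarrow Q$ together with the fact that objects of $h[\mathbf{S}_s, \mathbf{Sp}]$ invert weak equivalences.

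To reach the target as stated I would then identify $h[\mathbf{S}_s, \mathbf{Sp}]$ with $h[\mathbf{S}_{fin}, \mathbf{Sp}]$. This is exactly the already-recorded fact that $h[\mathbf{S}_s, \mathbf{Sp}]$ models finitary homotopy functors: the inclusion $\mathbf{S}_{fin} \hookrightarrow \mathbf{S}_s$ carries every object to a weakly equivalent one (via the replacement maps $X \to Q^i X$), so restriction along it induces an equivalence on homotopy categories of weak-equivalence-preserving functors, with inverse given by transporting values along those replacement maps. This is the same deformation bookkeeping as in Lemma 21, now performed on the indexing category rather than on the target category. Composing the two equivalences yields $[\mathbf{S}^{\circ}, \mathbf{Sp}] \sim h[\mathbf{S}_{fin}, \mathbf{Sp}]$.

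The step I expect to be the real obstacle is not any of this formal bookkeeping --- that is Lemma 21 repackaged --- but securing the replacement functor $Q$ in the first place. Since $\mathbf{S}$ is large, one cannot simply form the projective model structure on $[\mathbf{S}, \mathbf{S}]$ and extract a functorial fibrant replacement; one must instead invoke the model structures on suitable subcategories of large diagrams of \cite{Chorny} and \cite{largedia} to guarantee that a functorial, simplicial $Q$ exists and restricts compatibly to $\mathbf{S}_s$. Once such a $Q$ is in hand, the smallness of $\mathbf{S}_s$ is exactly what lets us step back into the well-behaved, cofibrantly generated setting in which Lemma 21 was established, and everything else is routine.
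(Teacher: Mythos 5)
Your proposal is correct and takes essentially the same route as the paper, whose entire proof of this lemma is the remark that it ``follows Lemma 21 verbatim,'' resting on exactly the preparatory observations you spell out: the smallness of $\mathbf{S}_s$, the existence of the simplicial fibrant replacement $Q$ via the large-diagram model structures of \cite{Chorny} and \cite{largedia}, its restriction to a deformation $\mathbf{S}_s \to \mathbf{S}^{\circ}$ onto good objects, and the identification of $h[\mathbf{S}_s,\mathbf{Sp}]$ with finitary homotopy functors. If anything you are more explicit than the paper about the bridge from $h[\mathbf{S}_s,\mathbf{Sp}]$ to $h[\mathbf{S}_{fin},\mathbf{Sp}]$, which the paper records only as an unproved assertion in the surrounding text.
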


\noindent Similarly, the proof of Lemma 25 can be trivially adjusted to this case.

\begin{lemB}
Homotopy Operators are classified by their symbols,
i.e., the diagram
\begin{diagram}
[(\mathbf{S}^{\circ})^{op}\otimes \mathbf{S}^{\circ}, \mathbf{Sp}]&\rTo&Op^{\mathbf{Sp}}(\mathbf{S^{\circ}})\\
\dTo&&\dTo\\
h[\mathbf{S}^{op}_{s}\otimes \mathbf{S}_{s}, \mathbf{Sp}]&\rTo&hOp^{\mathbf{S}}(\mathbf{S}_{s})
\end{diagram}
induces an equivalence of homotopy categories.

\end{lemB}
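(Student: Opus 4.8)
The plan is to reduce Lemma B entirely to the already-established Lemma 26 by exhibiting the finitary situation as an instance of the small simplicial model category setup used there. The category $\mathbf{S}$ of all simplicial sets is large, so Lemma 26 does not apply directly; the essential observation is that the functors of interest are finitary, so nothing is lost by restricting attention to a small subcategory. First I would invoke Lemma 28 to replace $h[\mathbf{S}_{fin},\mathbf{Sp}]$ with $[\mathbf{S}^\circ,\mathbf{Sp}]$, so that the left-hand vertical equivalence in the displayed diagram is exactly the equivalence of homotopy categories furnished by $Q^*$ and $P^*$. The role played by a small simplicial model category $\mathbf{M}$ in Lemma 26 is now played by the small homotopical category $\mathbf{S}_s$ together with its good subcategory $\mathbf{S}^\circ$, and the deformation retraction $Q:\mathbf{S}_s \to \mathbf{S}^\circ$ supplies precisely the simplicial fibrant-cofibrant replacement hypothesis required in the construction of $hSym$, $S$, and $D$.

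The second step is to transport the four constructions $P^*$, $Q^*$, $hSym$, $S$, $D$ verbatim into this setting. Each of these was defined for a small simplicial model category possessing a simplicial fibrant-cofibrant replacement; since $\mathbf{S}_s$ is small, homotopical, simplicially enriched, and comes equipped with the deformation $Q$, the definitions and their well-definedness go through unchanged. In particular the symbol functor
\[
hSym:hOp^{\mathbf{S}}(\mathbf{S}_s)\longrightarrow h[\mathbf{S}^{op}_s\otimes \mathbf{S}_s,\mathbf{Sp}]
\]
and its candidate inverse $D$, together with the strict classification $Op^{\mathbf{Sp}}(\mathbf{S}^\circ)\cong [(\mathbf{S}^\circ)^{op}\otimes \mathbf{S}^\circ,\mathbf{Sp}]$ coming from Corollary 3 with $\mathbf{V}=\mathbf{Sp}$, assemble into exactly the square displayed in the statement.

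Having placed the diagram in the framework of Lemma 26, the third step is to rerun the two key equivalences. The proof of Lemma 23, that $\mathbb{L}\sim SD(\mathbb{L})$, used only that $Q_c\,Sym_{\mathbb{L}}(-,B)$ is projectively cofibrant and that the derived representables $\Sigma^\infty Map(QA,-)$ are objectwise cofibrant, together with the coend homotopy-invariance statements of Corollary 11; all of these hold in $\mathbf{S}^\circ$ just as in a general $\mathbf{M}^\circ$. The proof of Lemma 24, that $\mathbb{L}^h\sim DS\mathbb{L}^h$, first checks agreement on derived representables (again a formal coend manipulation) and then extends to arbitrary functors by writing each $F$ as a homotopy colimit of representables via the enriched Yoneda lemma and cofibrancy, using that every homotopy operator preserves all homotopy colimits. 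Both arguments are insensitive to the replacement of $\mathbf{M}$ by $\mathbf{S}_s$, so they apply verbatim.

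The main obstacle is not any single calculation but rather the bookkeeping needed to justify that $\mathbf{S}_s$ legitimately stands in for the small simplicial model category $\mathbf{M}$ of Lemma 26. Unlike $\mathbf{M}$, the category $\mathbf{S}_s$ is only homotopical with a compatible enrichment, not itself a simplicial model category, and its fibrant replacement $Q$ is inherited from a model structure on a large-diagram category $[\mathbf{S},\mathbf{S}]$ whose existence must be cited (from \cite{Chorny} and \cite{largedia}) rather than constructed here. The delicate point is therefore to confirm that $\mathbf{S}_s$ retains exactly the structural inputs the earlier proofs consumed — smallness, the objectwise cofibrancy of representables, the deformation onto $\mathbf{S}^\circ$, and the applicability of the coend cofibrancy results of Corollary 11 — so that ``the proof of Lemma 25 can be trivially adjusted'' is in fact justified and not merely asserted. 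Once this verification is recorded, the equivalence of homotopy categories claimed in Lemma B is immediate from Lemma 26.
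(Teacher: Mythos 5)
Your proposal is correct and follows exactly the paper's route: the paper's entire proof is the remark that ``the proof of Lemma 25 can be trivially adjusted to this case,'' having just built the deformation $Q:\mathbf{S}_s \to \mathbf{S}^{\circ}$ from fibrant replacement in $[\mathbf{S},\mathbf{S}]$ (citing \cite{Chorny}, \cite{largedia}) and established the analogue of Lemma 21, and your argument simply makes explicit the bookkeeping the paper leaves implicit (note the equivalence $[\mathbf{S}^{\circ},\mathbf{Sp}] \sim h[\mathbf{S}_{fin},\mathbf{Sp}]$ you cite is Lemma 27 in the paper's numbering, and since all simplicial sets are cofibrant only a \emph{fibrant} deformation is needed, as the paper observes).
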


\begin{theoremA}
The operators $P_{n}$ and $D_{n}$ acting on the category of functionals are pointwise weakly equivalent to the derived coends \begin{center}
$(P_{n}F)(X) \sim \int^{ S^{\circ}} P_{n}( \Sigma^{\infty}Map_{S}(K,-)_{+})(X) \tilde{\wedge} F(K)$\\
$(D_{n}F)(X) \sim \int^{ S^{\circ}} D_{n} (\Sigma^{\infty}Map_{S}(K,-)_{+})(X) \tilde{\wedge} F(K)$.\end{center}
\end{theoremA}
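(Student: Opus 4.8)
The plan is to recognize $P_n$ and $D_n$, restricted to functionals, as homotopy operators and then feed them into the classification of Lemma B; the two coend formulas are then nothing more than the explicit description of the composite $D\circ S$ applied to these particular operators.

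First I would verify that $P_n$ and $D_n$, viewed as endofunctors of the category of functionals $h[\mathbf{S}_s,\mathbf{Sp}]$, genuinely lie in $hOp^{\mathbf{S}}(\mathbf{S}_s)$ in the sense of Definition 22. The simplicial structure and homotopy invariance are supplied by Theorem 16, which asserts that $P_n$ is naturally simplicial and homotopy invariant on simplicial homotopy functors; $D_n$ inherits both properties as a homotopy fiber of a map of such functors. Preservation of \emph{all} homotopy colimits is recorded in the discussion following Theorem 16: since the codomain $\mathbf{Sp}$ is stable, both $P_n$ and $D_n$ commute with arbitrary homotopy colimits. Hence $P_n,D_n\in hOp^{\mathbf{S}}(\mathbf{S}_s)$.

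Next I would invoke Lemma B. Its right-hand vertical equivalence is realized by the functors $S$ and $D$, with the natural equivalence $\mathbb{L}^h\sim DS\mathbb{L}^h$ (the analogue of Lemma 24 in the finitary setting). Applying this to $\mathbb{L}^h=P_n$ and $\mathbb{L}^h=D_n$ gives $P_n\sim DS(P_n)$ and $D_n\sim DS(D_n)$. It then remains to unwind the right-hand composite. By the pointwise formula for the derived operator $D$ together with the identity $Sym_{S\mathbb{L}^h}(K,X)=P^*hSym_{\mathbb{L}^h}(K,X)$, one obtains
\[
DS(P_n)(F)(X)\;\sim\;\int^{K\in\mathbf{S}^\circ} P_n\bigl(\Sigma^\infty Map_{\mathbf{S}^\circ}(QK,Q-)\bigr)(X)\,\tilde{\wedge}\,F(K),
\]
where the inner expression is precisely the value of $P_n$ on the derived representable $\Sigma^\infty Map_{\mathbf{S}^\circ}(QK,Q-)$, and the $\mathbf{Sp}$-enriched tensor $\tilde{\otimes}$ is the derived smash $\tilde{\wedge}$.

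Finally I would replace the derived representable $\Sigma^\infty Map_{\mathbf{S}^\circ}(QK,Q-)$ by the cleaner expression $\Sigma^\infty Map_S(K,-)_+$ of the statement: the two agree up to the fibrant-cofibrant replacements folded into the $Q$'s, and the homotopy invariance of $P_n$ lets one absorb those replacements without altering the value up to weak equivalence. Substituting gives the first formula, and the identical argument with $D_n$ in place of $P_n$ gives the second. I expect the main obstacle to be not the formal bookkeeping but two substantive inputs: the verification that $P_n$ and $D_n$ really do preserve \emph{all} homotopy colimits on functionals, which is exactly where the stability of $\mathbf{Sp}$ and the finitary hypothesis enter; and the set-theoretic care needed to remain inside the small models $\mathbf{S}_s$ and $\mathbf{S}^\circ$ of Lemma 27 so that Lemma B applies in the first place. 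Matching the derived mapping space with $\Sigma^\infty Map_S(K,-)_+$ is a third, milder point riding on homotopy invariance.
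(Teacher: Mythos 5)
Your proposal is correct and takes essentially the same route as the paper, whose proof of Theorem A is left implicit: it follows immediately from Lemma B (via the equivalence $\mathbb{L}^h \sim DS\mathbb{L}^h$) once one notes that $P_n$ and $D_n$, acting on functionals, preserve weak equivalences and all homotopy colimits, exactly the two-step argument you give. Your supplementary remarks --- absorbing the $Q$-replacements into the representables by homotopy invariance, and the smallness of $\mathbf{S}_s$ and $\mathbf{S}^\circ$ needed for Lemma B to apply --- merely make explicit the bookkeeping the paper delegates to Lemma 27 and the surrounding discussion.
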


\noindent The corresponding result for based spaces is obvious.\\

The calculation of the n-excisive approximation of a functor has been reduced to the special case of its value on representables. 
For based, finite simplicial sets the coefficients of the homogeneous functors $D_{n}(\Sigma^{\infty}Map_{\mathbf{S}_{*}}(K,-))$ are given in (\cite{CalcIII} \S 7); for an alternative derivation see Klein and Rognes \cite{Chain} and Hesselholt \cite{Hessel}. 
In fact, the entire Maclaurin tower for stable mapping spaces has configuration space models developed by Arone \cite{Snaith}.  Combining these results gives a description of the  Maclaurin tower for an arbitrary finitary homotopy functor.
We do not intend to present Arone's models in all cases but will give an example.  Let $K \mapsto K^{*}$ be any simplicial model for the S-dual functor, for example $K \mapsto \underline{hom}_{\mathbf{Sp}}(\Sigma^{\infty}K,\Sigma^{\infty}S^{0})$.

\begin{lem}
Let K be a finite simplicial set, then $D_{1}(\Sigma^{\infty} Map_{\mathbf{S}_{*}}(K,-))(X) \sim K^{*}\wedge X$
and 
\begin{align*}
D_{1}(F)(X) &\sim \int^{ \mathbf{S^{\circ}_{*}}}  D_{1} (\Sigma^{\infty}Map_{\mathbf{S}_{*}}(K,-))(X) \tilde{\wedge} F(K) \\
&\sim \int^{ \mathbf{S}^{\circ}_{*}} (K^{*}\wedge X) \tilde{\wedge} F(K).
\end{align*}
\end{lem}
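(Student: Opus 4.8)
The plan is to establish the two equivalences in sequence, obtaining the coend formula as a formal consequence of Theorem A once the value on representables is identified. First I would observe that $\Sigma^{\infty}Map_{\mathbf{S}_{*}}(K,-)$ is reduced, since $Map_{\mathbf{S}_{*}}(K,*)=*$, so on it $D_{1}$ agrees with $P_{1}$ and the resulting functor is $1$-homogeneous, hence linear. By Theorem 18 a linear functor $L$ is determined by its coefficient spectrum via $L(X)\sim C\wedge X$ with $C=L(S^{0})$, so it suffices to identify
\[ C \;=\; D_{1}(\Sigma^{\infty}Map_{\mathbf{S}_{*}}(K,-))(S^{0}) \]
with the $S$-dual $K^{*}$.

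To compute $C$ I would use the standard stabilization formula for the linearization of a reduced functor to spectra, $P_{1}F(X)\sim \mathrm{hocolim}_{n}\,\Omega^{n}F(\Sigma^{n}X)$ \cite{CalcIII}, which here gives $C\sim \mathrm{hocolim}_{n}\,\Omega^{n}\Sigma^{\infty}Map_{\mathbf{S}_{*}}(K,S^{n})$. The key input is the natural Spanier--Whitehead comparison map
\[ \Sigma^{\infty}Map_{\mathbf{S}_{*}}(K,S^{n}) \longrightarrow \underline{hom}_{\mathbf{Sp}}(\Sigma^{\infty}K,\Sigma^{\infty}S^{n}) \cong \Sigma^{n}K^{*}, \]
available because $K$ is finite. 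Applying $\Omega^{n}$ and passing to the homotopy colimit, the right-hand side is the constant diagram $K^{*}$ (by stability, $\Omega^{n}\Sigma^{n}\sim\mathrm{id}$), and I would show the comparison becomes a weak equivalence in the colimit. This last point is the main obstacle: it is a connectivity (Freudenthal) argument, showing that as the target sphere $S^{n}$ becomes highly connected the unstable mapping space $Map_{\mathbf{S}_{*}}(K,S^{n})$ agrees with the function spectrum through a range growing with $n$, so that the colimit converges to the dual. Granting this, $C\sim K^{*}$, whence $D_{1}(\Sigma^{\infty}Map_{\mathbf{S}_{*}}(K,-))(X)\sim K^{*}\wedge X$; alternatively one may read off the same coefficient from (\cite{CalcIII} \S 7).

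For the second equivalence I would invoke Theorem A in the case $n=1$ for based spaces, which yields
\[ D_{1}(F)(X)\sim \int^{\mathbf{S}^{\circ}_{*}} D_{1}(\Sigma^{\infty}Map_{\mathbf{S}_{*}}(K,-))(X)\,\tilde{\wedge}\,F(K), \]
and then substitute the identification just obtained. The substitution is legitimate because the Spanier--Whitehead comparison, and hence the equivalence $D_{1}(\Sigma^{\infty}Map_{\mathbf{S}_{*}}(K,-))(X)\sim K^{*}\wedge X$, is natural in $K$ and so defines an objectwise weak equivalence of $(\mathbf{S}^{\circ}_{*})^{op}$-diagrams. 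By the homotopy invariance of the derived coend in its weight variable (the analogue of Corollary 11 for the smash $\tilde{\wedge}$ used throughout \S 3), such an objectwise equivalence of objectwise cofibrant diagrams induces a weak equivalence on derived coends; the only care needed is to arrange objectwise cofibrancy, which is handled by the cofibrant replacement built into $\tilde{\wedge}$.
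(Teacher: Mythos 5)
Your proposal is correct, and at the structural level it matches the paper: identify the value of $D_{1}$ on the representables $\Sigma^{\infty}Map_{\mathbf{S}_{*}}(K,-)$, then feed that identification into the based version of Theorem A. The difference is in the first half: the paper offers no proof of the coefficient identification at all, simply citing (\cite{CalcIII} \S 7), with \cite{Chain} and \cite{Hessel} as alternatives, whereas you reconstruct the classical derivation yourself via the stabilization formula $P_{1}F(X)\sim \mathrm{hocolim}_{n}\,\Omega^{n}F(\Sigma^{n}X)$ for reduced functors, the natural comparison $\Sigma^{\infty}Map_{\mathbf{S}_{*}}(K,L)\rightarrow \underline{hom}_{\mathbf{Sp}}(\Sigma^{\infty}K,\Sigma^{\infty}L)$, and Spanier--Whitehead duality for finite $K$. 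This is sound and is essentially the argument underlying the cited sources; what it buys is a self-contained lemma, at the cost that your one acknowledged obstacle --- the connectivity estimate making the comparison an equivalence in the colimit --- is stated but not carried out. To close it, induct on the cells of $K$: a cofibration $K'\rightarrow K$ with spherical cofiber turns both sides into fiber sequences up to a range, and Freudenthal handles the cellular case, giving connectivity growing linearly in $n$; alternatively, as you note, one can simply cite \cite{CalcIII} \S 7 as the paper does. Two small points in your favor beyond the paper's text: you observe that reducedness ($Map_{\mathbf{S}_{*}}(K,*)=*$) is what lets $D_{1}=P_{1}$ here, and you make explicit the naturality-in-$K$ of the equivalence and the objectwise-cofibrancy bookkeeping needed to substitute it into the derived coend --- both of which the paper leaves implicit. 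One minor caveat: your appeal to Theorem 18 requires the linear functor to be finitary, which holds because $K$ is finite (so $Map_{\mathbf{S}_{*}}(K,-)$ commutes with filtered homotopy colimits); in fact your stabilization computation already gives $P_{1}(\Sigma^{\infty}Map_{\mathbf{S}_{*}}(K,-))(X)\sim K^{*}\wedge X$ naturally at every finite $X$, so Theorem 18 could be bypassed entirely.
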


\noindent Thus, the derivative of a finitary simplicial homotopy functor is given by the simple, classical expression.

\begin{theoremC}
Let $K^*$ be the S-dual of a finite simplicial set K, then the derivative at the basepoint of an arbitrary functional $F$ is given by the expression
$$\partial^{1}(F)(*) \sim \int^{\mathbf{S}^{\circ}} K^{*}\tilde{\wedge}F (K).$$
\end{theoremC}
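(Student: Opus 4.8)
The plan is to recover the derivative spectrum $\partial^{1}F(*)$ as the coefficient of the linear approximation $D_{1}F$ and then to read off its value from the coend model of Lemma 28 by evaluating at the zero-sphere. First I would recall from Theorem 20 that a linear (that is, $1$-homogeneous) functor $L$ is classified by its coefficient spectrum $C = L(S^{0})$, with $L(X) \sim C \wedge X$. Applied to the differential of $F$, this identifies the derivative with the value of the linearization on the zero-sphere,
$$\partial^{1}F(*) \sim D_{1}F(S^{0}),$$
which is equally the result of evaluating the layer formula $D_{1}F(X) \sim \partial^{1}F(*) \wedge X$ at $X = S^{0}$ and using that $S^{0}$ is the smash unit.

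Next I would substitute $X = S^{0}$ into the pointwise weak equivalence of Lemma 28, which is the $n=1$ instance of Theorem A. Since that equivalence is valid for every $X$, in particular at $X = S^{0}$, we obtain
$$\partial^{1}F(*) \sim D_{1}F(S^{0}) \sim \int^{\mathbf{S}^{\circ}} (K^{*} \wedge S^{0}) \,\tilde{\wedge}\, F(K),$$
and the canonical equivalence $K^{*} \wedge S^{0} \sim K^{*}$ then yields the asserted formula
$$\partial^{1}F(*) \sim \int^{\mathbf{S}^{\circ}} K^{*} \,\tilde{\wedge}\, F(K).$$
These two steps are formal once Theorem A is in hand: they use only that it is a pointwise equivalence and that $\tilde{\wedge}$ and the derived coend respect the equivalence $K^{*} \wedge S^{0} \sim K^{*}$, which is guaranteed by Corollary 11.

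The genuine bookkeeping lies in the passage between the based and unbased settings. Lemma 28 and the identification of the coefficient $K^{*} \wedge X$ are carried out for based spaces, whereas Theorem C concerns functionals on the unbased category $\mathbf{S}^{\circ}$, and the representables of Theorem A carry a disjoint basepoint, $\Sigma^{\infty}Map_{S}(K,-)_{+}$. To reconcile these I would invoke Theorem 19, the equivalence $\phi^{*}$ induced on homogeneous functors by the forgetful functor $\phi:\mathbf{S}_{*}\rightarrow\mathbf{S}$, transporting the based coefficient computation to the unbased side. The step I expect to be the main obstacle is precisely this transport: one must verify that evaluation at $S^{0}$ is compatible with $\phi^{*}$, and that the added disjoint basepoint contributes only a constant term, annihilated upon passing to $D_{1}$, so that the unbased coefficient $D_{1}(\Sigma^{\infty}Map_{S}(K,-)_{+})(S^{0})$ agrees with $K^{*}$ after the $\phi^{*}$-identification. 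Granting this compatibility, the theorem follows immediately from the substitution above.
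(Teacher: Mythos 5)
Your proposal is correct and follows essentially the same route as the paper: the paper likewise deduces Theorem C from the $n=1$ instance of Theorem A in the based setting (Lemma 28, using the classical identification $D_{1}(\Sigma^{\infty}Map_{\mathbf{S}_{*}}(K,-))(X) \sim K^{*}\wedge X$ cited from Goodwillie, Klein--Rognes, and Hesselholt) and then reads off the derivative as the coefficient spectrum $D_{1}F(S^{0})$ via the classification of linear functors by their value on $S^{0}$. The based/unbased bookkeeping you flag is exactly what the paper disposes of via Theorem 19 and its remark that the based version of Theorem A is immediate, so your extra care there only makes explicit what the paper leaves tacit.
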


\section*{Acknowledgment}  This paper presents some of the results of my Ph.D. thesis (Brown University, 2007).  I would like to thank my advisor Tom Goodwillie for  generously and patiently sharing his knowledge with me while I worked on this project.

\end{document}